\newtheorem{COUNT}{}[]
\theoremstyle{plain}
\newtheorem{theorem}[COUNT]{Theorem}
\newtheorem{proposition}[COUNT]{Proposition}
\newtheorem{lemma}[COUNT]{Lemma}
\newtheorem*{lemma*}{Lemma}
\theoremstyle{remark}
\def\AAa{\mathcal {A}}
\def\BB{\mathcal {B}}
\def\FF{\mathcal {F}}
\def\HH{\mathcal {H}}
\def\MM{\mathcal {M}}
\def\NN{\mathcal {N}}
\def\QQ{\mathcal {Q}}
\def\SSs{\mathcal {S}}
\def\ZZ{\mathcal {Z}}
\def\E{\mathbb {E}}
\def\N{\mathbb {N}}
\renewcommand{\P}{\mathbb {P}} 
\def\R{\mathbb {R}}
\def\Z{\mathbb {Z}}
\def\eref#1{(\ref{#1})}
\DeclareMathOperator{\supp}{supp}
\DeclareMathOperator{\diam}{diam}
\def\leb{\mathscr {L}}
\DeclareMathOperator{\e}{\mathbf{e}}
\def\support{[\chi]}
\def\suppW{\mathcal{B}}
\title{Lyapunov Exponents of Brownian Motion: Decay Rates for Scaled Poissonian Potentials and Bounds}
\author{Johannes Rue\ss \\ Universit\"at T\"ubingen}
\date{}
\begin{document}
\maketitle

{
\renewcommand{\thefootnote}{}\footnotetext{2010 Mathematics Subject Classification. Primary: 60K37, 60J65. Secondary: 82B44.}
\renewcommand{\thefootnote}{}\footnotetext{Key words: Brownian motion, Quenched, Random potential, Lyapunov exponent, Combes-Thomas estimate, Green function, Annealed.}
}
\begin{center}
\begin{minipage}{13 cm}
\textsc{Abstract:} We investigate Lyapunov exponents of Brownian motion in a nonnegative Poissonian potential $V$. The Lyapunov exponent depends on the potential $V$ and our interest lies in the decay rate of the Lyapunov exponent if the potential $V$ tends to zero. In our model the random potential $V$ is generated by locating at each point of a Poisson point process with intensity $\nu$ a bounded compactly supported nonnegative function $W$.
We show that for sequences of potentials $V_n$ for which $\nu_n \|W_n\|_1 \sim D/n$ for some constant $D > 0$ ($n \to \infty$), the decay rates to zero of the quenched and annealed Lyapunov exponents coincide and equal $c n^{-1/2}$ where the constant $c$ is computed explicitly. Further we are able to estimate the quenched Lyapunov exponent norm from above by the corresponding norm for the averaged potential.
\end{minipage}
\end{center}

\section*{Introduction and Results}

We consider Brownian motion in $\R^d$, $d \in \N$. Let $Z$ be the canonical process on the space $C(\R_{\geq 0},\R^d)$ and $P_x$ be the Wiener measure starting from $x \in \R^d$. By $E_x$ we denote the expectation operator belonging to $P_x$, for $P_0$ and $E_0$ we simply write $P$, $E$.
In the model we consider, $Z$ is moving in a random environment $V$ which is formed by obstacles located at points of a Poisson point process $\chi$ on $\R^d$ with constant intensity $\nu >0$, independent of $Z$. Let $(\Omega,\FF,\P)$ be the probability space on which $\chi$ is defined and let $\E$ be the expectation operator belonging to $\P$. The obstacles are shaped by a measurable bounded compactly supported function $W: \R^d \to \R_{\geq0}$ which, in order to avoid trivialities, we assume not to be almost everywhere equal to zero. The potential $V:\R^d \times \Omega \to [0,\infty]$ is defined as
\[
V(x,\omega):=V(x):=  \sum_{p \in \support}W(x-p)
\]
where $x \in \R^d$, $\omega \in \Omega$ and $\support$ is the support of the random measure $\chi$. $V$ will be called the Poissonian potential generated by $W$ and $\nu$.

We define the Green function for Brownian motion in the environment $\eta + V$, where $\eta\geq 0$ is a constant, see e.g.\ \cite[(2.2.3)]{Sznitman98}: Let $p(t,x,y)$, $t >0$, $x,y \in \R^d$ be the transition probabilities of Brownian motion. Define for $\omega \in \Omega$, $t >0$, $x,y \in \R^d$,
\begin{align*}
	r_\eta(t,x,y,\omega):= p(t,x,y)E_{x,y}^t[\exp\{-\int_0^t\eta + V(Z_s, \omega)ds\}].
\end{align*}
Here $E_{x,y}^t$ is the Brownian bridge from $x$ to $y$ in time $t$. The Green function is defined as $g_\eta: \R^d \times \R^d \times \Omega \to [0,\infty]$,
\begin{align*}
g_\eta(x,y,\omega):=\int_0^\infty r_\eta(s,x,y, \omega)ds
\end{align*}
and can be interpreted as density for the expected occupation times measure for Brownian motion starting in $x$ and being killed at rate $\eta + V$.

Introduce the hitting time $H(y):= \inf\{s \geq 0: Z_s \in \overline{B(y,1)}\}$ where $B(y,1)$ is the open ball of radius $1$ centered at $y$ and $\overline{B(y,1)}$ its closure. Set
\begin{align*}
&e(y,\eta + V):= E[\exp\{- \int_0^{H(y)} \eta + V (Z_s)ds\} ], \quad  a(y,\eta + V):= - \ln e(y,\eta + V).
\end{align*}
For $\omega$ fixed $e(y, \eta+V)$ is the probability that Brownian motion being killed at rate $\eta + V$ survives until it reaches $\overline{B(y,1)}$. The quantity $e(y,\eta + V)$ can also be interpreted as the equilibrium potential of $\overline{B(y,1)}$ relative to $-\frac 1 2 \Delta + \eta + V$ (see \cite[Proposition 2.3.8]{Sznitman98}).

The asymptotic behavior of the Green function $g_\eta(0,y,\omega)$ for $\|y\|_2 \to \infty$ has been studied in \cite{Sznitman94}. Sznitman shows exponential decay and also gives an alternative characterization for the limiting object in terms of the quantity $a$. The following Theorem is a version of \cite[Theorem 5.2.5]{Sznitman98}, which omits the fact that the convergence also holds uniformly for all directions.
\begin{theorem}[{\cite[Theorem 5.2.5]{Sznitman98}}]\label{theo:lya_quenched}
There is a norm $\alpha_{\eta+V}: \R^d \to \R_{\geq 0}$, nonrandom, such that $\P$-a.s.\ for any $y \in \R^d$,
\begin{align*}
\lim_{n \to \infty} - \frac{1}{n} \ln g_\eta(0,ny,\omega)= \lim_{n \to \infty} \frac{1}{n} a(ny, \eta + V) = \alpha_{\eta +V}(y)
\end{align*}
and the limits also hold in $L^1(\P)$.
\end{theorem}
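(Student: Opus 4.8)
\medskip
\noindent\textbf{Proof proposal.}
The plan is to realise a slightly regularised version of $a(\cdot,\eta+V)$ as an \emph{exactly} subadditive stationary cocycle over the translations $\{\theta_x\}_{x\in\R^d}$ of the Poisson configuration space, to invoke Kingman's subadditive ergodic theorem along each fixed direction, and then to transfer the conclusion first to $a(\cdot,\eta+V)$ and afterwards to $-\ln g_\eta(0,\cdot\,,\omega)$ by comparison estimates whose cost is a stationary, $\P$-integrable error (negligible after division by $n$). For $z\in\R^d$ write $e_z(y,\eta+V)(\omega):=E_z[\exp\{-\int_0^{H(y)}(\eta+V(Z_s))\,ds\}]$ (so $e_0=e$), put $\tilde a(y)(\omega):=-\ln\inf_{z\in\overline{B(0,1)}}e_z(y,\eta+V)(\omega)$, and note $\tilde a(y)\ge a(y,\eta+V)\ge 0$ since all these quantities lie in $(0,1]$.

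\emph{Subadditivity and integrability.} Fix $y$ with $\|y\|_2>3$. Splitting the expectation defining $e_v(x+y,\eta+V)$ according to whether $\overline{B(x,1)}$ is entered before $\overline{B(x+y,1)}$, applying the strong Markov property at the entrance time $H(x)$, and using that the integrand $\eta+V$ is nonnegative (so that $\int_0^{H(x)}(\eta+V)\ge\int_0^{H(x+y)}(\eta+V)$ on the complementary event, which kills the error terms) yields
\[
e_v(x+y,\eta+V)(\omega)\ \ge\ \Bigl(\inf_{w\in\overline{B(x,1)}}e_w(x+y,\eta+V)(\omega)\Bigr)\cdot e_v(x,\eta+V)(\omega),\qquad v\in\overline{B(0,1)}.
\]
Taking the infimum over $v\in\overline{B(0,1)}$ and using translation covariance of the hitting functionals gives $\tilde a(x+y)(\omega)\le\tilde a(x)(\omega)+\tilde a(y)(\theta_x\omega)$; thus for a fixed direction $y_0$ with $\|y_0\|_2>3$ the array $X_{m,n}(\omega):=\tilde a\bigl((n-m)y_0\bigr)(\theta_{m y_0}\omega)$ is a genuine stationary subadditive cocycle under the ergodic (indeed mixing) transformation $\theta_{y_0}$. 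For integrability, restrict the expectation defining $e(y_0,\eta+V)$ to the event that $Z$ stays in a unit tube $T$ around $[0,y_0]$ and reaches $\overline{B(y_0,1)}$ before time $\|y_0\|_2$, and apply Jensen's inequality: this gives $a(y_0,\eta+V)(\omega)\le -\ln P[\text{tube event}]+\|y_0\|_2\bigl(\eta+\|W\|_\infty\,\chi(T')\bigr)$, with $T'$ the $\diam(\supp W)$-neighbourhood of $T$; taking $\P$-expectations and using $\E[\chi(T')]=\nu\leb(T')$ shows $\E[a(y_0,\eta+V)]<\infty$, and together with the a priori bound $\tilde a(y_0)-a(y_0)\le L$ below this gives $\E[X_{0,1}]=\E[\tilde a(y_0)]<\infty$.

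\emph{Kingman, transfer, and the norm property.} Kingman's theorem applied to $X_{m,n}$ produces a deterministic constant $\alpha_{\eta+V}(y_0):=\lim_n \tfrac1n\E[\tilde a(ny_0)]$ with $\tfrac1n\tilde a(ny_0)\to\alpha_{\eta+V}(y_0)$ $\P$-a.s.\ and in $L^1(\P)$, the determinism coming from the ergodicity of $\theta_{y_0}$ on the Poisson field. Next, comparing $e_z(ny_0)$ over $z\in\overline{B(0,1)}$ by an elliptic Harnack inequality for $-\tfrac12\Delta+\eta+V$ on $\overline{B(0,1)}\subset B(0,2)$ (legitimate since $\|ny_0\|_2>2$, so $z\mapsto e_z(ny_0)$ is a positive solution there) gives $0\le\tilde a(ny_0)-a(ny_0)\le L$ for a \emph{single} stationary integrable random variable $L$, independent of $n$; hence $\tfrac1n\bigl(\tilde a(ny_0)-a(ny_0)\bigr)\to 0$ $\P$-a.s.\ and in $L^1$, so $\tfrac1n a(ny_0,\eta+V)\to\alpha_{\eta+V}(y_0)$ as well. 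The same mechanism applies to the comparison between the Green function and the equilibrium potential — one writes $g_\eta(0,w,\omega)$ as $e(w,\eta+V)(\omega)$ times a local Green-function mass near $w$, up to Harnack factors and a heat-kernel prefactor polynomial in $\|w\|_2$ — so that $\tfrac1n\bigl|\ln g_\eta(0,ny_0,\omega)+a(ny_0,\eta+V)\bigr|\to0$ $\P$-a.s.\ (Borel--Cantelli handles the shifted stationary terms, $\sum_n\P(L\circ\theta_{ny_0}>\varepsilon n)\le\varepsilon^{-1}\E[L]<\infty$, and $\tfrac{\ln n}{n}\to0$ handles the prefactor) and in $L^1$, identifying the two limits in the statement. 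Finally, positive homogeneity of $\alpha_{\eta+V}$ is immediate from the limit, the triangle inequality $\alpha_{\eta+V}(y+y')\le\alpha_{\eta+V}(y)+\alpha_{\eta+V}(y')$ descends from the subadditivity above, and the lower bound $a(ny,\eta+V)\ge cn\|y\|_2$ — for $\eta>0$ from $e(ny,\eta+V)\le e(ny,\eta)$ together with the classical rate $-\ln E[\exp\{-\eta H(ny)\}]\sim\sqrt{2\eta}\,\|ny\|_2$, and in general from a tube-and-obstacle survival estimate for Brownian motion among Poissonian obstacles — shows $\alpha_{\eta+V}$ is a genuine norm, in particular Lipschitz; so it extends from rational to all directions, the convergence for an irrational $y$ following by rational approximation and the a priori estimate $|a(x,\eta+V)-a(x',\eta+V)|\le C(1+\|x-x'\|_2)$ together with continuity of $\alpha_{\eta+V}$.

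\emph{Main obstacle.} The genuine work is concentrated in the two Harnack-type comparisons — ``infimum of the equilibrium potential over $\overline{B(x,1)}$ versus its value at the centre'' and ``$g_\eta(0,\cdot)$ versus $e(\cdot,\eta+V)$'' — which must be carried out with constants that are \emph{uniform in the environment}, more precisely whose logarithm is a stationary, $\P$-integrable function of $\omega$, even though $\eta+V$ is unbounded (Poisson clumping) and merely locally integrable. This is exactly the point at which a Combes--Thomas / local elliptic-regularity estimate for $-\tfrac12\Delta+\eta+V$ is needed, exploiting that $V\ge 0$, that $W$ is bounded with compact support, and that $V$ restricted to any fixed bounded set has all moments under $\P$. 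Granting these estimates, everything else is bookkeeping around Kingman's and Birkhoff's theorems and the ergodicity of the Poisson point process.
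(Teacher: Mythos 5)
This theorem is not proved in the paper at all: it is cited verbatim from Sznitman's book as \cite[Theorem 5.2.5]{Sznitman98}, with the text explicitly noting that the version quoted here merely omits the uniformity in direction. There is therefore no ``paper's own proof'' to compare against; what you have written is a reconstruction of Sznitman's original argument (developed in \cite{Sznitman94} and \cite[Chapter 5]{Sznitman98}).

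As such a reconstruction, your outline is essentially the right one: regularise $a$ to the genuinely subadditive, stationary functional $\tilde a(y)=-\ln\inf_{z\in\overline{B(0,1)}}e_z(y,\eta+V)$; verify integrability by a tube estimate combined with Jensen and Campbell's formula; apply Kingman's subadditive ergodic theorem along each fixed ray, using mixing of the Poisson field under $\theta_{y_0}$ for determinism; and then transfer first from $\tilde a$ to $a$ and then from $a$ to $-\ln g_\eta(0,\cdot,\omega)$ by elliptic comparison estimates whose logarithm is a stationary, $\P$-integrable error, with Borel--Cantelli absorbing the shifted errors and the polynomial heat-kernel prefactor. You have also correctly located where the real work lies: the Harnack-type and Green-function--vs--equilibrium-potential comparisons with constants that are uniform in $n$ and $\P$-integrable in the environment, which is exactly the content of Sznitman's Propositions 5.2.2--5.2.4 and the surrounding a priori estimates. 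Two small points you should not leave implicit. First, the norm property also requires $\alpha_{\eta+V}(-y)=\alpha_{\eta+V}(y)$, which does not follow from homogeneity and the triangle inequality; it comes from the invariance of $\P$ under the reflection $\chi\mapsto-\chi$ together with the reflection invariance of Brownian motion, giving $\E\,a(ny,\eta+V)=\E\,a(-ny,\eta+V)$. Second, strict positivity of $\alpha_{\eta+V}$ when $\eta=0$ (so that one truly gets a norm) is not immediate and is itself a substantive step in Sznitman's development; your phrase ``tube-and-obstacle survival estimate'' is the right pointer but would need to be spelled out.
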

The limiting object $\alpha_{\eta + V}$ is called the quenched Lyapunov exponent for Brownian motion in Poissonian potential. In this article we will consider Lyapunov exponents for various potentials and it is therefore convenient to use the notation '$\alpha_{\eta+V}$' in order to indicate, which potential the Lyapunov exponent belongs to, although $\alpha_{\eta+V}$ is a nonrandom function.

We are also going to study annealed Lyapunov exponents: Sznitman shows in \cite{Sznitman95} exponential decay of the $\P$-averaged Green function $\E g_\eta(0,y,\omega)$ for $\|y\|_2 \to \infty$. Here again, since we don't need more details, we give a version of \cite[Theorem 5.3.4]{Sznitman98} which only considers convergence on fixed directions, not mentioning that this convergence holds uniformly on all directions:
\begin{theorem}[{\cite[Theorem 5.3.4]{Sznitman98}}]\label{theo:lya_annealed}
There exists a norm $\beta_{\eta + V}$ on $\R^d$, nonrandom, such that for $y \in \R^d$,
\begin{align*}
\lim_{n \to \infty} -\frac{1}{n} \ln \E g_\eta(0,ny,\omega)  = \lim_{n \to \infty} - \frac{1}{n} \ln \E e(ny, \eta + V) = \beta_{\eta + V}(y).
\end{align*}
\end{theorem}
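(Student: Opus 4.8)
\emph{Proof idea.} The plan is to produce $\beta_{\eta+V}$ from a deterministic (Fekete) subadditivity argument applied to $\psi(y):=-\ln\E e(y,\eta+V)$, and then to transfer the resulting limit to $-\frac1n\ln\E g_\eta(0,ny)$ by elementary pasting estimates. \emph{Step 1 (approximate subadditivity of $\psi$).} Fix $x,y\in\R^d$ with $\|x\|_2,\|y\|_2\ge 3$, say (shorter vectors are irrelevant to the limit). Running the canonical process from $0$ until it first reaches $\overline{B(x,1)}$ and from there until it reaches $\overline{B(x+y,1)}$, the strong Markov property at $H(x)$ together with $V\ge0$ gives
\begin{align*}
e(x+y,\eta+V)\ \ge\ e(x,\eta+V)\cdot\inf_{z\in\overline{B(x,1)}}e^z(x+y,\eta+V),
\end{align*}
where $e^z$ denotes the quantity $e$ with the canonical process started at $z$ rather than at $0$ (the infimum may be pulled out of the $P$-expectation since it does not depend on the $Z$-path). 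For any fixed path both factors on the right are \emph{decreasing} functionals of the Poisson configuration $\chi$, because enlarging $\chi$ only increases $V$ pointwise; by the FKG inequality for Poisson point processes they are positively correlated, so
\begin{align*}
\E e(x+y,\eta+V)\ \ge\ \E e(x,\eta+V)\cdot\E\Bigl[\inf_{z\in\overline{B(x,1)}}e^z(x+y,\eta+V)\Bigr].
\end{align*}
By translation invariance of $\chi$ the last expectation equals $\E[\inf_{z\in\overline{B(0,1)}}e^z(y,\eta+V)]$. Since $z\mapsto e^z(y,\eta+V)$ is a positive solution of $\tfrac12\Delta u=(\eta+V)u$ on $\R^d\setminus\overline{B(y,1)}\supset\overline{B(0,1)}$ (as $\|y\|_2>2$), the elliptic Harnack inequality bounds $\inf_{\overline{B(0,1)}}u$ below by $C^{-1}u(0)$ with a constant $C\ge1$ depending only on $d$ and on $\|\eta+V\|_{L^\infty(B(0,2))}$; this $C^{-1}$ is again decreasing in $\chi$ and a.s.\ positive, so a further application of FKG yields $\E[\inf_{z\in\overline{B(0,1)}}e^z(y,\eta+V)]\ge c_0\,\E e(y,\eta+V)$ with $c_0:=\E[C^{-1}]\in(0,1]$. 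Altogether $\E e(x+y,\eta+V)\ge c_0\,\E e(x,\eta+V)\,\E e(y,\eta+V)$, i.e.\ $\psi(x+y)\le\psi(x)+\psi(y)-\ln c_0$.

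\emph{Step 2 (Fekete and the norm property).} For fixed $y$ the sequence $n\mapsto\psi(ny)-\ln c_0$ is subadditive, so $\beta_{\eta+V}(y):=\lim_{n\to\infty}\tfrac1n\psi(ny)=\inf_{n\ge1}\tfrac1n\bigl(\psi(ny)-\ln c_0\bigr)$ exists in $[0,\infty)$ and is nonrandom by construction. Positive homogeneity $\beta_{\eta+V}(\lambda y)=\lambda\beta_{\eta+V}(y)$ for $\lambda>0$ follows from the definition (first for rational $\lambda$ by the usual interpolation of Fekete limits, then for all $\lambda$ using the bounds below), and the triangle inequality follows by dividing the last inequality of Step 1 (with $x=ny_1$, $y=ny_2$) by $n$ and letting $n\to\infty$. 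It remains to establish two-sided linear bounds $c_1\|y\|_2-c_2\le\psi(y)\le c_3\|y\|_2$. For the upper bound on $\psi$ (lower bound on $\E e$), condition on the $Z$-path (which is independent of $\chi$) and apply Jensen's inequality in $\P$: since $\E V\equiv\nu\|W\|_1$ is constant,
\begin{align*}
\E e(y,\eta+V)\ \ge\ E_0\bigl[\exp\{-(\eta+\nu\|W\|_1)H(y)\}\bigr]\ \ge\ c\,e^{-c'\|y\|_2}
\end{align*}
by the standard Brownian hitting-time estimate (note $\eta+\nu\|W\|_1>0$ as $W\not\equiv0$, $\nu>0$). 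The lower bound on $\psi$ (upper bound on $\E e$) is immediate from $\E e(y,\eta+V)\le E_0[e^{-\eta H(y)}]$ when $\eta>0$, and for $\eta=0$ it is the classical exponential decay of the Brownian survival probability among obstacles of positive density, cf.\ \cite[Chapters~3, 4]{Sznitman98}. Hence $\beta_{\eta+V}$ is a norm.

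\emph{Step 3 (transfer to the Green function).} By the strong Markov property at $H(ny)$,
\begin{align*}
g_\eta(0,ny,\omega)\ =\ E_0\Bigl[\exp\Bigl\{-\int_0^{H(ny)}(\eta+V)(Z_s)\,ds\Bigr\}\mathbf 1_{\{H(ny)<\infty\}}\,g_\eta(Z_{H(ny)},ny,\omega)\Bigr],
\end{align*}
and on $\{H(ny)<\infty\}$ one has $\|Z_{H(ny)}-ny\|_2=1$, whence $g_\eta(Z_{H(ny)},ny,\omega)\le C_1:=\sup_{\|z\|_2=1}\int_0^\infty e^{-\eta s}p(s,z,0)\,ds<\infty$ (finite for $d\ge2$ if $\eta>0$, and for $d\ge3$ also if $\eta=0$; the remaining low-dimensional massless cases require the extra killing coming from the obstacles). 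This gives $\E g_\eta(0,ny)\le C_1\,\E e(ny,\eta+V)$. For the converse, let $A_n$ be the event that $\chi$ charges no point of $B(ny,2+R)$, with $R$ chosen so that $\supp W\subset B(0,R)$; then $\P(A_n)=q_0>0$ independently of $n$, and on $A_n$ the potential vanishes on $B(ny,2)$, so $g_\eta(z,ny,\omega)\ge c_2>0$ for every $z\in\partial B(ny,1)$. Consequently $g_\eta(0,ny,\omega)\ge c_2\,e(ny,\eta+V)\,\mathbf 1_{A_n}$, and since both $e(ny,\eta+V)$ and $\mathbf 1_{A_n}$ are decreasing in $\chi$, FKG gives $\E g_\eta(0,ny)\ge c_2 q_0\,\E e(ny,\eta+V)$. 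Hence $-\tfrac1n\ln\E g_\eta(0,ny)$ and $-\tfrac1n\ln\E e(ny,\eta+V)$ differ by $O(1/n)$ and share the limit $\beta_{\eta+V}(y)$.

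\emph{Main obstacle.} The essential difficulty throughout is that every comparison — splitting the journey, moving the starting point inside the unit displacement ball, and replacing $g_\eta$ near $ny$ by a constant — must cost only a factor bounded uniformly in $n$. The delicate instance is the displacement step in Step 1: the Harnack constant there is random, which is what forces the repeated use of FKG to decorrelate it from the rest of the configuration. The massless low-dimensional cases ($d\le2$, $\eta=0$) require separately verifying finiteness of the relevant Green-function expectations from the positive density of obstacles; everything else is either soft (Fekete, FKG) or a routine Brownian estimate.
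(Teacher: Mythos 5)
The paper does not prove Theorem~\ref{theo:lya_annealed}: it is quoted as \cite[Theorem 5.3.4]{Sznitman98}, so there is no internal proof to compare against. Taken on its own, your outline (subadditivity of $\psi(y):=-\ln\E e(y,\eta+V)$ via strong Markov, Harnack, and FKG; two\mbox{-}sided linear bounds; pasting comparison between $\E g_\eta$ and $\E e$) is a legitimate route, and it differs from Sznitman's in one interesting way: you remove the random landing point $Z_{H(x)}\in\overline{B(x,1)}$ by an elliptic Harnack inequality whose random constant you decorrelate from the remaining configuration with a second FKG application, whereas Sznitman instead plants a ``clearing'' around the relay ball to make the comparison constant deterministic. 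Both are sound; yours trades an independence/clearing argument for FKG monotonicity. Your Jensen step $\E e(y,\eta+V)\ge E_0[e^{-(\eta+\nu\|W\|_1)H(y)}]$ is simply the annealed (and easier) cousin of the paper's Theorem~\ref{prop:alphapois_leq_alphaconst}.

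That said, the proposal is not complete, and the holes sit exactly where the original proof does its hard work. First, in Step~3 the bound $\E g_\eta(0,ny)\le C_1\,\E e(ny,\eta+V)$ uses $C_1=\sup_{\|z\|_2=1}\int_0^\infty e^{-\eta s}p(s,z,0)\,ds$, which is infinite when $\eta=0$ and $d\le 2$; there $g_\eta(Z_{H(ny)},ny,\omega)$ cannot be dominated by a deterministic constant and one must instead show something like $\E\bigl[\sup_{\|z\|_2=1}g_0(z,0,\omega)\bigr]<\infty$ using the killing supplied by the obstacles. You flag this but do not resolve it, and it is a genuine gap, not a routine patch. Second, the linear lower bound $\psi(y)\ge c_1\|y\|_2-c_2$ when $\eta=0$ is the statement that the annealed survival probability decays exponentially; this is a substantive confinement estimate (it is what makes $\beta_V$ a norm rather than merely a seminorm), and citing \cite[Chapters 3, 4]{Sznitman98} defers the hardest ingredient rather than proving it. Finally, a small point of hygiene in Step~1: the Harnack constant must be chosen as a fixed monotone function of $\|\eta+V\|_{L^\infty(B(0,2))}$ (not the optimal configuration-dependent constant) so that $C^{-1}$ is genuinely decreasing in $\chi$ and FKG applies; the claim then goes through since $\chi$ is a.s.\ locally finite, giving $\E[C^{-1}]\in(0,1]$.
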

Analogous results hold in the discrete setting of simple symmetric random walks on $\Z^d$ in random potentials. Zerner has constructed Lyapunov exponents in the quenched case in \cite{Zerner98}. The annealed Lyapunov exponent is considered by Flury in \cite{Flury07}. A recent contribution is given in \cite{Mourrat11}. For further results, models and quantities related to Brownian motion moving in a Poissonian potential we refer to \cite[Chapter 5 and 7]{Sznitman98}.

The aim of this article is to study the behavior of annealed and quenched Lyapunov exponents when the potential converges to zero. In the case of random walks in random potentials this question has been investigated by Wang in \cite{Wang02}. However, the results there have been improved by Kosygina, Mountford and Zerner, see \cite{Kosygina10}. In the present article we will establish analogous results to those of \cite{Kosygina10}. 

The behavior of Brownian motion in scaled Poissonian potentials $\varphi V$ with scaling function $\varphi: \R \to \R_{\geq 0}$ has been studied for various scaling functions $\varphi$. (See e.g.\ \cite[Chapter 7]{Sznitman98}.) Our results complement results of W\"uthrich in \cite{Wuethrich99} where increasing potentials are considered: W\"uthrich shows for continuous $W$ and for the scaling $\varphi(n)=n \in \N$ that $n^{-1/2}\alpha_{n(\eta + V)}(y)$ converges to the time-constant in the corresponding continuous first-passage percolation model as $n \to \infty$, uniformly in all directions $y \in \R^d$, $\|y\|_2=1$.
 
Potentials converging to zero have been studied for time dependent scalings for example by Merkl and W\"uthrich, see \cite{Merkl01} and the references therein. For small potentials, their result shows that Brownian motion essentially only feels the averaged potential, see the remarks on p.192 and equation (0.14) in \cite{Merkl01}. Our results are in the same spirit.

The present article also covers the case that the obstacles become rarefied. For the case of hard obstacles, i.e.\ $W = \infty$ on a compact set and zero else, and for densities decaying with time $t$ see e.g.\ \cite{Berg05} or \cite{Sznitman90} and the references therein. The exact calculation of $\lim_{t \to \infty} -\frac{(\ln t)^{2/d}}{t} \ln E[\exp\{-\int_0^t V(Z_s)ds\}]$ in \cite[(0.2)]{Sznitman93} in particular illustrates the behavior of this limit under time independent scalings of the function $W$ as well as of the density $\nu$. For time independent scalings of the density of hard obstacles see \cite[(1.2)]{Donsker75}.

In the following let $W_n$, $n \in \N$ be bounded measurable compactly supported functions from $\R^d \to \R_{\geq 0}$ not almost everywhere equal to zero. For $n \in \N$ let $\nu_n > 0$, $\eta_n \geq 0$, and let $V_n$ be the Poissonian potential generated by $W_n$ and $\nu_n$. We want $V_n$ to converge to zero in a suitable way, in particular we want to cover the case when a potential $V$ is multiplied by a sequence of real numbers converging to zero, i.e.\ $V_n = \gamma_n V$ with $\gamma_n \to 0$ for $n \to \infty$. It will turn out that $L^1(\R^d)$ convergence of the functions $W_n$ to zero will be the notion of convergence which allows us to determine quantitative decay rates of the Lyapunov exponents.

We will state explicit decay rates for the quenched and the annealed Lyapunov exponents, moreover, these rates will coincide for the quenched and the annealed case. Under this perspective the difference between the annealed and the quenched picture vanishes if $W_n$ is small enough. As far as we know, it is an open problem whether in high dimensions for small $W$ the quenched and the annealed Lyapunov exponents coincide (see \cite[p. 326]{Sznitman98}), for already solved aspects of this problem in the discrete setting we refer to the references and comments given in \cite{Kosygina10} and in \cite{Mourrat11}.

In order to establish decay rates we will not need any regularity assumptions on the obstacles $W_n$. The only restriction on the obstacles needed is boundedness of the $L^\infty(\R^d)$ norms and of the supports:
\begin{theorem}\label{theo:convergence}
Assume $\sup_{n \in \N}\|n W_n\|_\infty <\infty, \quad \sup_{n \in \N} \diam \supp(W_n)< \infty$. Assume there exists a constant $D \geq 0$ such that
\begin{align*}
\lim_{n \to \infty} n(\eta_n + \nu_n \|W_n\|_1) = D.
\end{align*}
Then for any $y \in \R^d$,
\begin{align*}
\lim_{n \to \infty} \sqrt{n} \alpha_{\eta_n + V_n}(y) 
= \lim_{n \to \infty} \sqrt{n} \beta_{\eta_n + V_n}(y) = \sqrt{2D}\|y\|_2.
\end{align*}
Moreover, the convergences are uniform for $y$ in any compact subset of $\R^d$.
\end{theorem}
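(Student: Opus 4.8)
Write $\lambda_n:=\eta_n+\nu_n\|W_n\|_1$, so that $n\lambda_n\to D$ by hypothesis. Since $-\ln$ is convex, Jensen's inequality gives $-\ln\E g_\eta(0,my,\cdot)\le\E\big(-\ln g_\eta(0,my,\cdot)\big)$ for every $m$; dividing by $m$ and invoking Theorems~\ref{theo:lya_quenched} and \ref{theo:lya_annealed} yields $\beta_{\eta+V}\le\alpha_{\eta+V}$ for every admissible potential. Hence it suffices to prove
\[
\limsup_{n\to\infty}\sqrt n\,\alpha_{\eta_n+V_n}(y)\le\sqrt{2D}\,\|y\|_2
\qquad\text{and}\qquad
\liminf_{n\to\infty}\sqrt n\,\beta_{\eta_n+V_n}(y)\ge\sqrt{2D}\,\|y\|_2 ,
\]
for then all the quantities in between are squeezed onto $\sqrt{2D}\,\|y\|_2$. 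The heuristic behind both halves is that a potential with obstacle heights of order $1/n$ and spatial average $\E V_n=\nu_n\|W_n\|_1$ of order $D/n$ is, to leading order, felt only through its average, so $\eta_n+V_n$ ought to behave like the constant potential $\lambda_n$, whose Lyapunov exponent is the classical $\sqrt{2\lambda_n}\,\|\cdot\|_2$; and $\sqrt{2n\lambda_n}\to\sqrt{2D}$.

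For the upper bound I would work with the quenched quantity. By the $L^1(\P)$ statement in Theorem~\ref{theo:lya_quenched}, $\alpha_{\eta_n+V_n}(y)=\lim_{m\to\infty}\tfrac1m\,\E\,a(my,\eta_n+V_n)$. Fix $t>0$, restrict the Brownian expectation defining $e(my,\eta_n+V_n)$ to the event $\{H(my)\le t\}$, and apply Jensen's inequality to $x\mapsto e^{-x}$ conditionally on that event; this gives
\[
a(my,\eta_n+V_n)\le-\ln P\big[H(my)\le t\big]+\eta_n\,E\big[H(my)\,\big|\,H(my)\le t\big]+E\Big[\textstyle\int_0^{H(my)}\!V_n(Z_s)\,ds\ \Big|\ H(my)\le t\Big].
\]
Taking the $\P$--expectation and using that $Z$ and $\chi$ are independent and $\E V_n(x)=\nu_n\|W_n\|_1$ is \emph{constant}, the last term averages to $\nu_n\|W_n\|_1\,E[H(my)\mid H(my)\le t]\le\nu_n\|W_n\|_1\,t$, whence $\E\,a(my,\eta_n+V_n)\le-\ln P[H(my)\le t]+\lambda_n t$. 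Bounding $P[H(my)\le t]\ge P[Z_t\in\overline{B(my,1)}]$ by the explicit Gaussian density, optimising over $t$ (the optimum is of order $m\|y\|_2/\sqrt{2\lambda_n}$), dividing by $m$ and letting $m\to\infty$ kills the $O(\ln m)$ corrections and leaves $\alpha_{\eta_n+V_n}(y)\le\sqrt{2\lambda_n}\,\|y\|_2$; multiplying by $\sqrt n$ and using $n\lambda_n\to D$ gives the first inequality. This step uses neither regularity of $W_n$ nor any lower bound on $\|W_n\|_1$ or $\nu_n$, only stationarity of the Poissonian potential.

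For the lower bound I would work annealed, starting from $\beta_{\eta_n+V_n}(y)=\lim_{m\to\infty}-\tfrac1m\ln\E\,e(my,\eta_n+V_n)$ and integrating out $\chi$ by the exponential formula for Poisson point processes:
\[
\E\,e(my,\eta_n+V_n)=E\Big[\exp\Big\{-\eta_n H(my)-\nu_n\!\int_{\R^d}\!\big(1-e^{-f_x}\big)\,dx\Big\}\Big],\qquad f_x:=\int_0^{H(my)}\!W_n(Z_s-x)\,ds .
\]
Because $\int_{\R^d}f_x\,dx=\|W_n\|_1\,H(my)$, one needs $\int_{\R^d}(1-e^{-f_x})\,dx\ge(1-o(1))\,\|W_n\|_1 H(my)$ on the paths carrying the bulk of this expectation, i.e.\ the deficit $\int_{\R^d}\big(f_x-(1-e^{-f_x})\big)\,dx$ coming from the concavity of $u\mapsto1-e^{-u}$ must be a negligible fraction of $\|W_n\|_1 H(my)$. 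Here the hypotheses $\sup_n\|nW_n\|_\infty<\infty$ and $R_0:=\sup_n\diam\supp W_n<\infty$ enter: they force $f_x\le\|W_n\|_\infty\cdot\big(\text{occupation time of }B(x,R_0)\text{ before }H(my)\big)=O(n^{-1})\cdot(\text{occupation time})$; on the relevant time scale $H(my)\asymp m\|y\|_2/\sqrt{2\lambda_n}$ a typical path is essentially ballistic with speed of order $\sqrt{\lambda_n}\asymp n^{-1/2}$, so its occupation times of bounded balls are of order $n^{1/2}$, giving $\sup_x f_x=O(n^{-1/2})\to0$ and hence no deficit, while the atypical paths --- slower than ballistic, or lingering in a small region --- accumulate enough killing (through $\eta_n$, and through $V_n$ since $\nu_n$ is bounded below so that $V_n$ consists of many weak obstacles) to be discarded. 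Granting this, $\E\,e(my,\eta_n+V_n)\le E\big[e^{-(1-o(1))\lambda_n H(my)}\big]+(\text{terms super-exponentially small in }m)$, whose exponential decay rate in $m$ is $\sqrt{2(1-o(1))\lambda_n}\,\|y\|_2$; thus $\beta_{\eta_n+V_n}(y)\ge\sqrt{2(1-o(1))\lambda_n}\,\|y\|_2$, and $\sqrt n\,\beta_{\eta_n+V_n}(y)\to\sqrt{2D}\,\|y\|_2$. Turning the last display into a proof --- quantitatively controlling the occupation times / self-intersection local times and ruling out the lingering paths --- is the main obstacle, and it is genuinely dimension-dependent: transience makes it routine for $d\ge3$, whereas for $d=1,2$ one must exploit the hitting time $H(my)$ and a change of measure to a ballistic motion with care.

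Finally, uniformity for $y$ in a compact set $K\subset\R^d$ is automatic: all the error terms above (the gap in the optimisation over $t$, the $O(\ln m)$ corrections, the bound on the deficit, the tail estimates for the occupation times) are uniform for $\|y\|_2$ in the bounded set $\{\|z\|_2:z\in K\}$, and $y\mapsto\sqrt{2D}\,\|y\|_2$ is continuous, so the pointwise convergences upgrade to uniform ones on $K$. Combining the two displayed inequalities with $\beta_{\eta_n+V_n}\le\alpha_{\eta_n+V_n}$ gives the statement.
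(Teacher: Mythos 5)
Your skeleton is right: squeeze $\sqrt n\,\beta_{\eta_n+V_n}\le\sqrt n\,\alpha_{\eta_n+V_n}$ between a lower bound on the annealed side and an upper bound on the quenched side. The two halves, however, deserve quite different grades.

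\emph{Upper bound.} Your argument is genuinely different from the paper's and, as far as I can see, sound. You restrict to $\{H(my)\le t\}$, apply conditional Jensen to $e^{-x}$, use $\E V_n\equiv\nu_n\|W_n\|_1$ and Fubini, and then optimise the free parameter $t$ against the Gaussian lower bound for $P[H(my)\le t]$; the $O(\ln t)$ and $O(\ln m)$ corrections vanish after dividing by $m$. This avoids the paper's discretisation entirely and is shorter. What you lose is that the paper actually proves the stronger statement $\alpha_{\eta+V}\le\alpha_{\eta+\E V}$ (Theorem~\ref{prop:alphapois_leq_alphaconst}), a comparison inequality of independent interest, whereas your Jensen-on-paths argument produces only the numerical bound $\alpha_{\eta+V}(y)\le\sqrt{2(\eta+\nu\|W\|_1)}\|y\|_2$ directly. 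For the purposes of Theorem~\ref{theo:convergence} alone, your route suffices and is efficient.

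\emph{Lower bound.} Here there is a genuine gap. You correctly reach the representation $\E e(my,\eta_n+V_n)=E[\exp\{-\eta_n H(my)-\nu_n\int(1-e^{-f_x})\,dx\}]$ and correctly identify that the task is to replace $1-e^{-f_x}$ by $(1-o(1))f_x$. But from there your sketch does not constitute a proof, for three reasons. First, you invoke ``$\nu_n$ is bounded below'' to argue that atypical paths are killed; this is \emph{not} a hypothesis --- the theorem allows, say, $\eta_n=0$, $\|W_n\|_1$ fixed and $\nu_n\sim D/n\to0$, so neither of the killing mechanisms you appeal to need be present. Second, you describe the estimate as ``genuinely dimension-dependent'' with transience making $d\ge3$ ``routine'' and $d=1,2$ requiring extra care; in fact the correct argument (the one in the paper, following Kosygina--Mountford--Zerner) is dimension-free, because it projects to one dimension in the direction $y$ and then only needs boundedness of one-dimensional local times up to a hitting time. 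Third, and most importantly, you do not supply the device that actually makes $f_x$ uniformly small in the regime where $1-e^{-f_x}\ge\delta f_x$ is applied: the decomposition of the path into i.i.d.\ blocks between the crossing times $T_i:=\inf\{s\ge0:Z_s\cdot y=in^{1/2}\}$, the per-block, per-$x$ truncation $J_{T_{i-1}}^{T_i}(x)\wedge(w_\infty n^{-1/4})$, and the restriction to $O(n^{1/8})$ blocks per obstacle centre $x$ which together force the argument of $\Lambda$ below a fixed threshold $t_0(\delta)$. Without this block-and-truncate scheme there is no clean way to obtain the i.i.d.\ product $E[\exp\{-\eta_nT_1-\nu_nY_1\}]^{m_r}$ that feeds into the Brownian hitting-time Laplace transform, nor a tractable good event $\AAa_n$ with $P[\AAa_n^c]\to0$ on which $Y_1=\delta T_1\|W_n\|_1$. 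Your paragraph honestly flags this as ``the main obstacle,'' but that obstacle is precisely the content of the lower-bound proof; as written it is missing.

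The uniformity claim is fine for the upper bound, where all constants are explicit, but for the lower bound it inherits the same gap: the paper obtains it from rotational invariance of $P[\AAa_n^c]$ and the $y$-independence of $n_0$, neither of which appears in your sketch.
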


The proof of this theorem is divided into two main steps. In order to establish a lower bound on $\liminf_{n \to \infty}\sqrt n \beta_{\eta_n + V_n}(y)$ we will use techniques developed by Kosygina, Mountford and Zerner in \cite{Kosygina10}. In order to establish an upper bound on $\limsup_{n \to \infty} $ $\sqrt n \alpha_{\eta_n + V_n}(y)$ we will estimate the quenched Lyapunov exponent. Sznitman's proof of Theorem \ref{theo:lya_quenched} gives a bound on the quenched Lyapunov exponent, see \cite[Proposition 1.2]{Sznitman94}: For $y \in \R^d$,
\begin{align*}
\alpha_{\eta + V}(y) \leq \sqrt{2(\eta + \lambda_d + \|W\|_\infty \nu \omega_d(a+2)^d)}\|y\|_2
\end{align*}
where $\lambda_d$ stands for the principal Dirichlet eigenvalue of $-\frac{1}{2} \Delta$ in $B(0,1)$, $\omega_d$ is the volume of $B(0,1)$, and $a>0$ is such that $W$ is supported in $\overline{B(0,a)}$.
The bound, we will derive, refines this result and in a heuristic sense shows that Brownian motion prefers moving in a hilly random environment to moving in an averaged environment:
\begin{theorem}\label{prop:alphapois_leq_alphaconst}
For $y \in \R^d$,
\begin{align*}
\alpha_{\eta + V}(y) \leq \alpha_{\eta + \E V}(y) = \sqrt{2 (\eta + \nu \|W\|_1)} \|y\|_2.
\end{align*}
\end{theorem}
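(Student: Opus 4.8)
The plan is to prove the equality by a direct computation and the inequality by the Gibbs--Donsker--Varadhan variational representation of $a$, used in the one direction that is actually available. First, by Campbell's formula $\E V(x)=\nu\int_{\R^d}W(x-p)\,dp=\nu\|W\|_1$, so $\eta+\E V$ is the constant potential $c:=\eta+\nu\|W\|_1>0$ (recall $W\not\equiv 0$ and $\nu>0$). For a constant potential one has $a(ny,c)=-\ln E[\exp\{-cH(ny)\}]$, and optimizing the elementary estimates on $P(H(ny)\approx t)$ over $t>0$ (equivalently $\sup_{t>0}\{-ct-\|ny\|_2^2/(2t)\}=-\sqrt{2c}\,\|ny\|_2$) yields $\tfrac1n a(ny,c)\to\sqrt{2c}\,\|y\|_2$; this is classical, cf.\ \cite{Sznitman98}. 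This establishes the asserted identity $\alpha_{\eta+\E V}(y)=\sqrt{2(\eta+\nu\|W\|_1)}\,\|y\|_2$. It therefore remains to prove $\alpha_{\eta+V}(y)\le\alpha_{\eta+\E V}(y)$, and by the $L^1(\P)$-convergence in Theorem~\ref{theo:lya_quenched} it is enough to show, for every fixed $y\in\R^d$,
\[
  \E\,a(y,\eta+V)\ \le\ a(y,\eta+\E V),
\]
and then replace $y$ by $ny$, divide by $n$, and let $n\to\infty$.

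To prove this, fix $\omega$ and set $F_\omega:=\int_0^{H(y)}(\eta+V(Z_s,\omega))\,ds\ge 0$, so that $e(y,\eta+V,\omega)=E[e^{-F_\omega}]$. The entropy inequality (equivalently, Jensen's inequality applied after a change of path measure) gives, for every probability measure $Q$ on path space absolutely continuous with respect to $P$,
\[
  a(y,\eta+V,\omega)\ =\ -\ln E[e^{-F_\omega}]\ \le\ E_Q[F_\omega]\ +\ H(Q\mid P),
\]
where $H(Q\mid P)=E_Q[\ln(dQ/dP)]$ is the relative entropy. Taking $\E$ on both sides and using Fubini--Tonelli together with $\E V(\cdot)=\nu\|W\|_1$ turns the right-hand side into a deterministic quantity:
\[
  \E\,a(y,\eta+V)\ \le\ E_Q\Big[\int_0^{H(y)}(\eta+\nu\|W\|_1)\,ds\Big]+H(Q\mid P)\ =\ c\,E_Q[H(y)]+H(Q\mid P),
\]
valid for every such $Q$.

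Finally, choose $Q=Q^\ast$ to be the Gibbs tilt of $P$ by the constant potential $c$, i.e.\ $dQ^\ast/dP=e^{-cH(y)}/E[e^{-cH(y)}]$; here $E[e^{-cH(y)}]\in(0,1]$ and $xe^{-cx}\le 1/(ec)$ shows $E_{Q^\ast}[H(y)]<\infty$. A one-line computation gives $c\,E_{Q^\ast}[H(y)]+H(Q^\ast\mid P)=-\ln E[e^{-cH(y)}]=a(y,\eta+\E V)$, which is precisely the case of equality in the Gibbs variational formula. This yields $\E\,a(y,\eta+V)\le a(y,\eta+\E V)$, and after the rescaling described above, $\alpha_{\eta+V}(y)\le\alpha_{\eta+\E V}(y)$. (On $\{H(y)=\infty\}$, which is $Q^\ast$-null and on which $e^{-F_\omega}$ and $e^{-cH(y)}$ vanish, there is nothing to check.)

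The main point is to get the direction of the estimates right: pulling $\E$ inside $-\ln$ by Jensen produces the useless lower bound $\E\,a(y,\eta+V)\ge-\ln\E\,e(y,\eta+V)$, so one must instead invoke the variational representation and exploit that the optimal tilt $Q^\ast$ for the constant potential is admissible for the random potential as well --- this is the rigorous counterpart of the heuristic that a path does at least as well in a hilly environment as in the averaged one. The remaining technical points --- justifying the Fubini interchange of $\E$ with the path integral, the finiteness of $E_{Q^\ast}[H(y)]$ and of $a(y,\eta+\E V)$, and the harmless behaviour on $\{H(y)=\infty\}$ in transient dimensions --- are routine given that $W$ is bounded with compact support and $c=\eta+\nu\|W\|_1>0$.
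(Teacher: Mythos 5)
Your proof is correct, and it takes a genuinely different route from the paper's. The paper proves the key inequality $\E\,a(y,\eta+V)\le a(y,\eta+\E V(0))$ by a discretization scheme: using Lusin's theorem it approximates $W$ by functions $W_m$ that are piecewise constant on dyadic cubes, shows via Lemma~\ref{lem:Lambda_continuous} and dominated convergence that $a$ passes to the limit, and then, for the discretized potential, expresses $a(y,\eta+V_m,\NN)$ as a \emph{finite-dimensional} concave functional $\Lambda$ of the Poisson occupation numbers $(\#Q_k(z))_z$ (concavity obtained via H\"older) so that classical Jensen applies. You instead invoke the Gibbs/Donsker--Varadhan entropy inequality $-\ln E[e^{-F_\omega}]\le E_Q[F_\omega]+H(Q\mid P)$, take $\E$, use Tonelli plus $\E V(x)\equiv\nu\|W\|_1$ to collapse the right-hand side to $c\,E_Q[H(y)]+H(Q\mid P)$ with $c=\eta+\nu\|W\|_1$, and then pick the optimizer $Q^\ast=e^{-cH(y)}P/E[e^{-cH(y)}]$ for the averaged potential to conclude. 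This is essentially a dual formulation of the concavity of $F\mapsto a(y,\eta+F)$: the paper even remarks that for continuous $W$ one could apply Perlman's infinite-dimensional Jensen directly to that concave functional, but resorts to discretization to avoid regularity assumptions on $W$; your entropy route handles the general bounded measurable $W$ in one stroke, with no discretization, no Lusin, and no passage to the limit --- the only price is checking the mild integrability facts ($E_{Q^\ast}[H(y)]<\infty$ via $xe^{-cx}\le 1/(ec)$, finiteness of $H(Q^\ast\mid P)$, Tonelli for the nonnegative integrand) that you do address. Your reduction of the uniform formula for the constant-potential Lyapunov exponent to large-deviation bounds on $P(H(ny)\approx t)$ is somewhat sketchier than the paper's explicit Green-function computation, but the fact is classical and you cite it appropriately.
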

This bound corresponds to the bound given in the discrete setting by Zerner in \cite[Proposition 4]{Zerner98} applied to the special case that '$V$ is more variable than $\E V$'. Note that this inequality may also be read as $\alpha_{\eta + V}(y)\leq \sqrt{2 ( \eta + \E[V(0)])} \|y\|_2$. The equality in Theorem \ref{prop:alphapois_leq_alphaconst} is a well known calculation of the Lyapunov exponent for constant potentials.

The estimate of Sznitman as well as the estimate derived in this article can also be interpreted as versions of the Combes-Thomas estimate for our concrete model which do take into account the shape of the potential via $\nu \|W\|_\infty$ and $\nu \|W\|_1$ respectively, see e.g.\ \cite[Chapter 2.4]{Stollmann01} or \cite{Germinet03} for an actual account to this subject. For localization results concerning the present model we refer to \cite{Germinet05} and the references therein.

In order to prove Theorem \ref{prop:alphapois_leq_alphaconst}, in a first step we find a suitable discretization of the function $W$ which enables us to apply Jensen's inequality in the finite dimensional setting. A more direct proof gets available if in addition $W$ is assumed to be continuous. In this case one can use directly the concavity of the functional $F \mapsto a(y,\eta +F)$ on nonnegative continuous functions on $\R^d$ in order to apply a general version of Jensen's inequality established by Perlman (see \cite{Perlman74}) to the Pettis integrable random function $V$. This leads to Theorem \ref{prop:alphapois_leq_alphaconst} without any discretization. However, in order to derive the result in whole generality we are going to use the discretization technique.

\section*{Proof of the Upper Bound}

The upper bound on $\limsup_{n \to \infty} \sqrt n \alpha_{\eta_n+V_n}$ in Theorem \ref{theo:convergence} is a direct consequence of Theorem \ref{prop:alphapois_leq_alphaconst}. Hence we start by proving Theorem \ref{prop:alphapois_leq_alphaconst}.

Let $\leb$ denote the Lebesgue measure on $\R^d$ or $\R$ depending on the context. The following lemma examines continuity properties of $a$:
\begin{lemma}\label{lem:Lambda_continuous}
Let $y \in \R^d$, let $f_n$, $n \in \N$, $f$, $g$ be nonnegative, locally integrable functions on $\R^d$ with $f_n \leq g$ and $f_n \to f$ $\leb$-a.e.\ as $n \to \infty$. Then, as $n \to \infty$,
\begin{align*}
E[ \exp\{ -\int_0^{H(y)}f_n(Z_s)ds\}1_{H(y)< \infty}] \to E[ \exp\{ -\int_0^{H(y)}f(Z_s)ds\}1_{H(y)< \infty}].
\end{align*}
\end{lemma}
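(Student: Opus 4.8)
The plan is to fix the Brownian path $Z$ and the event $\{H(y) < \infty\}$, and argue that the integrand
\[
\exp\Bigl\{ -\int_0^{H(y)} f_n(Z_s)\,ds \Bigr\} \mathbf 1_{H(y)<\infty}
\]
converges $P$-a.s.\ to $\exp\{ -\int_0^{H(y)} f(Z_s)\,ds\}\mathbf 1_{H(y)<\infty}$ and then invoke dominated convergence (the integrands are bounded by $1$). The content is therefore entirely in the pathwise statement: for $P$-a.e.\ $\omega$ with $H(y)(\omega)<\infty$ one needs $\int_0^{H(y)} f_n(Z_s)\,ds \to \int_0^{H(y)} f(Z_s)\,ds$. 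Since $f_n \le g$ and $f_n \to f$ $\leb$-a.e., this is itself a dominated-convergence statement along the time axis, provided that $\int_0^{H(y)} g(Z_s)\,ds < \infty$ and that $f_n(Z_s) \to f(Z_s)$ for Lebesgue-a.e.\ $s \in [0,H(y)]$, $P$-a.s.

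First I would handle the a.e.\ convergence in time. The set $N := \{x : f_n(x) \not\to f(x)\}$ has $\leb(N)=0$ by hypothesis. By Fubini applied to the occupation measure of Brownian motion, $E\bigl[\int_0^{t} \mathbf 1_N(Z_s)\,ds\bigr] = \int_0^t \int_{\R^d} p(s,0,x)\mathbf 1_N(x)\,dx\,ds = 0$ for every $t$, so $P$-a.s.\ the Lebesgue measure of $\{s \ge 0 : Z_s \in N\}$ is zero; restricting to $[0,H(y)]$ gives $f_n(Z_s) \to f(Z_s)$ for a.e.\ $s$ in that interval. Next, for the domination, the same Fubini argument gives $E\bigl[\int_0^t g(Z_s)\,ds\bigr] = \int_0^t (p(s,\cdot,\cdot) * g)(0)\,ds$, which is finite for each fixed $t$ because $g$ is locally integrable and the heat kernel at positive times is bounded; a slightly more careful bound (splitting $g = g\mathbf 1_{B(0,R)} + g\mathbf 1_{B(0,R)^c}$ and using Gaussian tails, or just noting $\int_0^t \sup_s p(s,0,x)$ is integrable against a locally integrable $g$) shows $\int_0^t g(Z_s)\,ds < \infty$ $P$-a.s.\ for every $t$. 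Hence on $\{H(y) = t\}$ for any finite $t$ we get $\int_0^{H(y)} g(Z_s)\,ds < \infty$ $P$-a.s., and dominated convergence in $s$ yields the pathwise convergence of the time integrals.

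Finally I would assemble the two pieces: pathwise, $\exp\{-\int_0^{H(y)} f_n(Z_s)\,ds\} \to \exp\{-\int_0^{H(y)} f(Z_s)\,ds\}$ on $\{H(y)<\infty\}$ by continuity of $t \mapsto e^{-t}$, and on $\{H(y) = \infty\}$ the indicator kills everything so both sides are $0$; then dominated convergence in $\omega$ (dominating function $\equiv 1$, which is $P$-integrable) gives the claimed convergence of expectations. The only mildly delicate point — the ``main obstacle'' — is making the domination $\int_0^{H(y)} g(Z_s)\,ds < \infty$ rigorous, since $g$ is only assumed locally integrable and $H(y)$ is a (finite but) unbounded random time; this is why one reduces to fixed $t$ via $\{H(y)<\infty\} = \bigcup_{t\in\N}\{H(y)\le t\}$ and handles each $t$ separately, using that local integrability of $g$ plus boundedness of the heat kernel away from $s=0$ (together with integrability near $s=0$ since $\int_0^1 p(s,0,\cdot)\,ds$ is still locally integrable) makes $E\bigl[\int_0^t g(Z_s)\,ds\bigr]<\infty$.
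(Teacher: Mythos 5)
Your argument follows the paper's proof structurally (Fubini for the occupation time of the null set, then dominated convergence in $s$ and in $\omega$), and the extra point you isolate --- that one must check $\int_0^{H(y)} g(Z_s)\,ds < \infty$ $P$-a.s.\ on $\{H(y) < \infty\}$ before invoking dominated convergence along the time axis, a step the paper leaves tacit --- is exactly the right thing to worry about. However, the justification you offer for that step is wrong. Local integrability of $g$ does \emph{not} give $E\bigl[\int_0^t g(Z_s)\,ds\bigr] < \infty$: for $g(x) = e^{|x|^2}$, which is locally integrable, one already has $E[g(Z_s)] = \infty$ for $s \ge 1/2$, and boundedness of $p(s,\cdot,\cdot)$ away from $s=0$ does nothing against growth of $g$ at infinity. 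The proposed alternative is also false: $\sup_{0<s\le t} p(s,0,x) \asymp |x|^{-d}$, which is integrable neither at $0$ nor at $\infty$, so it cannot be ``integrable against a locally integrable $g$.'' The expectation route is a dead end.

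The deeper issue your instinct is touching is that the lemma, with $g$ only in $L^1_{\loc}$, is in fact too generous in dimension $d \ge 3$: $g(x)=|x|^{-2}$ is locally integrable for $d \ge 3$, yet $\int_0^t |Z_s|^{-2}\,ds = \infty$ $P$-a.s.\ for every $t>0$ when $Z_0=0$ (by Brownian scaling the law of $\int_0^t |Z_s|^{-2}\,ds$ is independent of $t>0$, and since the variable is nondecreasing in $t$ and $\int_0^\infty|Z_s|^{-2}\,ds=\infty$ by transience, it must be $\infty$ for every $t$; alternatively use the SDE for $\ln|Z_t|$). With $f_n = g\,\mathbf 1_{\{|x|<1/n\}}$ and $f\equiv 0$ the left side of the lemma's display is $0$ for every $n$ while the right side is $P[H(y)<\infty]>0$. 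This is harmless for the paper because the lemma is only ever applied with $g=\eta+V_\infty$, which is $P$-a.s.\ locally \emph{bounded}; in that case finiteness is immediate pathwise, since $(Z_s)_{0\le s\le H(y)}$ has compact range on $\{H(y)<\infty\}$ and therefore $\sup_{s\le H(y)} g(Z_s)<\infty$. So: keep your structure, drop the expectation argument, and either strengthen the hypothesis on $g$ to locally bounded (which is all the paper uses) or add $\int_0^{H(y)} g(Z_s)\,ds<\infty$ a.s.\ as a standing assumption --- then the time-domain dominated convergence is legitimate and the proof closes.
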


\begin{proof}
We will show that $P$-a.s.\ whenever $H(y)< \infty$,
\begin{align}\label{eq:lem:lambda_continuous}
\int_0^{H(y)} f_n(Z_s)ds \to \int_0^{H(y)} f(Z_s)ds  \text{ as } n \to \infty.
\end{align}
Dominated convergence then gives the desired result.

Let $\NN$ be a set with $\leb (\NN)=0$ and $f_n \to f$ on $\NN^c$. We examine the expectation of the occupation times measure of $\NN$ and apply Fubini's theorem
\begin{align*}
E[\int_0^{H(y)} 1_\NN(Z_s)ds]
& \leq E[\int_0^{\infty} 1_\NN(Z_s)ds]
= \int_0^{\infty} E[ 1_\NN(Z_s)] ds \\
& = \int_0^{\infty} \int_\NN p(s,0,z) dz \, ds
= 0.
\end{align*}
Thus $P$-a.s.\ the set $\{0 \leq s \leq H(y): Z_s \in \NN\}$ has zero $\leb$-measure, which assures convergence of the functions $s \mapsto f_n(Z_s)$ to $s \mapsto f(Z_s)$ $\leb$-a.e.. Using $f_n \leq g$ the dominated convergence theorem implies that $P$-a.s.\ whenever $H(y)< \infty$ the convergence in \eref{eq:lem:lambda_continuous} holds.
\end{proof}

This continuity property of $a$ will be crucial for the approximation techniques we are going to use in the proof of 
the following lemma.

\begin{lemma}\label{lem:apois_leq_aconst}
For $y \in \R^d$,
\begin{align*}
\E a(y,\eta + V) \leq a ( y,\eta +  \E V(0)).
\end{align*}
\end{lemma}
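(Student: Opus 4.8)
The plan is to prove $\E a(y, \eta + V) \le a(y, \eta + \E V(0))$ by exhibiting $V$ as a (suitably interpreted) average of translated copies of the deterministic environment and invoking a Jensen-type inequality for the concave functional $F \mapsto a(y, \eta + F)$. The key structural fact is that $a(y, \eta + F) = -\ln E[\exp\{-\int_0^{H(y)} \eta + F(Z_s)\,ds\}]$ is a concave function of the nonnegative potential $F$: the map $F \mapsto \exp\{-\int_0^{H(y)} F(Z_s)\,ds\}$ is convex in $F$ along each Brownian path, hence its expectation $e(y, \eta+F)$ is convex in $F$, and $-\ln$ of a convex positive functional need not be concave in general — so more precisely one uses that $F \mapsto \ln e(y,\eta+F)$ is convex (by Hölder applied to the Feynman–Kac functional), i.e. $a = -\ln e$ is concave in $F$. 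Lemma~\ref{lem:Lambda_continuous} supplies exactly the continuity/approximation needed to make the infinite-dimensional averaging rigorous.

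First I would handle the case of continuous $W$ by the direct route flagged in the introduction: view $\omega \mapsto V(\cdot, \omega)$ as a Pettis-integrable random element of the space of nonnegative continuous functions (its Pettis mean being the constant function $\E V(0) = \nu\|W\|_1$, by the formula for the intensity of a Poissonian potential), check that $F \mapsto a(y, \eta + F)$ is concave and upper semicontinuous there (concavity as above; semicontinuity via dominated convergence as in Lemma~\ref{lem:Lambda_continuous}), and apply Perlman's version of Jensen's inequality, \cite{Perlman74}, to conclude $\E a(y, \eta+V) \le a(y, \eta + \E V(0))$. For general bounded compactly supported $W$ I would instead discretize: fix a fine lattice $\varepsilon \Z^d$ intersected with a large ball containing $\supp W$, and approximate $V$ from the relevant side by a potential $V^{(\varepsilon)}$ built from cell-averages of $W$, so that $V^{(\varepsilon)}$ depends on the Poisson cloud only through finitely many nonnegative coordinates (the masses $\chi$ assigns near each lattice site, smeared by a fixed profile). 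On that finite-dimensional configuration Jensen's inequality applies to the concave function $a(y, \eta + \cdot)$ composed with the linear map from coordinates to potentials, giving $\E a(y, \eta + V^{(\varepsilon)}) \le a(y, \eta + \E V^{(\varepsilon)}(0))$, and $\E V^{(\varepsilon)}(0) \to \nu\|W\|_1$ as $\varepsilon \to 0$ since cell-averaging preserves the $L^1$ mass.

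Then I would pass to the limit $\varepsilon \to 0$. On the right-hand side, $a(y, \eta + \E V^{(\varepsilon)}(0)) \to a(y, \eta + \nu\|W\|_1)$ because $a(y, \eta + c)$ is continuous in the constant $c$ (explicitly $a(y,\eta+c)$ is the equilibrium-potential expression, continuous in $c$). On the left-hand side I would use Lemma~\ref{lem:Lambda_continuous}: choosing the discretization so that $V^{(\varepsilon)} \to V$ $\leb$-a.e. and stays dominated by a fixed bounded compactly supported function (e.g. $\|W\|_\infty$ times an indicator of a slightly enlarged ball, times a local bound on the number of Poisson points, handled $\omega$-by-$\omega$), Lemma~\ref{lem:Lambda_continuous} gives $e(y, \eta + V^{(\varepsilon)}) \to e(y, \eta + V)$ pointwise in $\omega$, hence $a(y, \eta + V^{(\varepsilon)}) \to a(y, \eta+V)$ $\P$-a.s.; a further dominated-convergence argument (using $0 \le a(y,\eta+F)$ and an $\omega$-independent upper bound, e.g. from the crude estimate $a(y,\eta+F) \le a(y,\eta) < \infty$ when $F \ge 0$, noting $a$ is monotone in $F$) transfers this to convergence of the $\P$-expectations. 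Combining the three limits yields the claim.

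The main obstacle I expect is the bookkeeping in the discretization step: one must arrange simultaneously that (i) $V^{(\varepsilon)}$ is a linear image of finitely many nonnegative random coordinates whose expectations are explicit, so that finite-dimensional Jensen applies cleanly; (ii) the $\P$-average of $V^{(\varepsilon)}(0)$ converges to $\nu\|W\|_1$; and (iii) $V^{(\varepsilon)} \to V$ $\leb$-a.e. with a uniform (in $\varepsilon$, though it may depend on $\omega$) integrable domination so that Lemma~\ref{lem:Lambda_continuous} is applicable and the outer expectation passes to the limit. Getting a single construction that satisfies all three — in particular a domination that survives the randomness — is the technical heart; the continuous-$W$ case is included mainly to motivate why the inequality should hold and to isolate the concavity input from the approximation-theoretic one.
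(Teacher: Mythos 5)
Your overall strategy matches the paper's: prove concavity of $u \mapsto -\ln E[\exp\{-\eta H(y)-w\cdot u\}]$ via H\"older, discretize $W$ so that $V$ becomes a linear function of the Poisson counts in cubes, apply finite-dimensional Jensen, and pass to the limit using Lemma~\ref{lem:Lambda_continuous} plus dominated convergence; the Perlman/Pettis route for continuous $W$ is also the alternative the paper itself notes. But there are two concrete gaps. First, restricting the lattice to ``a large ball containing $\supp W$'' does not make the discretized potential depend on only finitely many cell-counts: what must be bounded is the Brownian trajectory, not the support of $W$, since $V^{(\varepsilon)}(Z_s)$ involves Poisson points in $Z_s-\supp W$ and $Z_{[0,H(y)]}$ is a priori unbounded. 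The paper resolves this by truncating to the event $\NN=\{\|Z_s\|_\infty\le N\ \forall s\le H(y)\}$, proving $\E a(y,\eta+V_m,\NN)\le a(y,\eta+\E V_m(0),\NN)$, and then letting $N\to\infty$ by monotone convergence; without such a truncation your ``finite-dimensional Jensen'' is not finite-dimensional and the step does not go through as stated.

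Second, your dominated-convergence argument for $\E a(y,\eta+V^{(\varepsilon)})\to\E a(y,\eta+V)$ has the monotonicity backwards. Since $e(y,\eta+F)$ decreases in $F$, $a(y,\eta+F)=-\ln e(y,\eta+F)$ is \emph{increasing} in $F\ge 0$, so $a(y,\eta+F)\ge a(y,\eta)$; the bound $a(y,\eta+F)\le a(y,\eta)$ you invoke is false. The correct dominant is $a(y,\eta+V_\infty)$ where $V_\infty$ is the fixed (random) potential dominating all $V^{(\varepsilon)}$; this is not free and requires the integrability estimate \cite[(5.2.33)]{Sznitman98}, which the paper cites explicitly. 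As a minor aside, your cell-average discretization (via Lebesgue differentiation) is a legitimate alternative to the paper's Lusin-plus-sup-over-cube construction for obtaining the $\leb$-a.e.\ convergence of $W^{(\varepsilon)}$ to $W$, so that particular difference is a matter of taste rather than a gap.
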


\begin{proof}
First we are going to 'discretize' the potential $V$ by approximating $W$ with a suitable sequence of functions which are mostly like step functions. 

Since the support of $W$ is bounded, Lusin's theorem assures the existence of a sequence $(K^{(m)})_{m \in \N}$ of compact sets $K^{(m)} \subset \supp W$ such that $W \vert_{K^{(m)}}$ is continuous and $\leb(\supp W \setminus K^{(m)}) \leq 1/m$. A finite union $\bigcup_{i \in I} K_i$, $I \subset \N$, $|I|$ finite, of compact sets is compact, and continuity of $W$ on a finite number of closed sets $K_i$ implies continuity of $W$ on the union $\bigcup_{i \in I} K_i$ of these sets. Thus without restriction we assume the sequence $(K^{(m)})_{m \in \N}$ to be increasing, i.e.\ $K^{(m')} \subset K^{(m)}$ if $m' <m$.

Set $W^{(m)}:=  W1_{K^{(m)}}$ and let $V^{(m)}$ be the Poissonian potential generated by $W^{(m)}$ and $\nu$. For $x,y \in \R^d$ with $x_i \leq y_i$, $1 \leq i \leq d$, we define the cube $[x,y)$ as $\prod_{i=1}^d [x_i,y_i)$. For $k \in \N$ let $\lfloor x \rfloor_k:= (\lfloor x_i \rfloor_k)_{1\leq i \leq d}:=(\lfloor x_i 2^k \rfloor 2^{-k})_{1\leq i \leq d}$, i.e.\ $x$ rounded to the grid $2^{-k}\Z^d$. We denote the cube with side length $2^{-k}$ and $\lfloor x \rfloor_k$ in the bottom left corner by $Q_k(x):= [\lfloor x \rfloor_k, \lfloor x \rfloor_k + 2^{-k}\e )$, where $\e = (1,1,\ldots, 1) \in \R^d$. Define for $m$, $k \in \N$ the discretization of $W$ as the function $W_k^{(m)}: \R^d\times \R^d \to \R_{\geq 0}$
\begin{align*}
W_k^{(m)}(x,z):= \sup_{\ t \in Q_k(z)} W^{(m)}(x-t),\quad 
V_k^{(m)}(x) := \sum_{p \in [\chi ]} W_k^{(m)}(x,p).
\end{align*}

\begin{figure}[!tb]
\begin{center}
\includegraphics{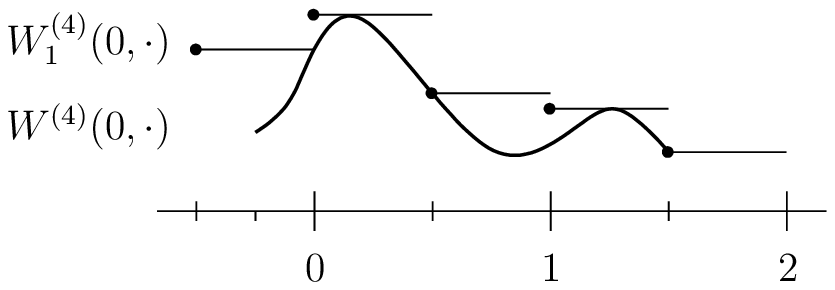}
\end{center}
\caption{Here $W(x) = (f1_{(-1/2,3/2]})(- x)$ where $f:\R \to [0,\infty)$ is continuous. A possible approximation of $W$ is $W^{(m)}(x):= (f1_{[-1/2+1/m,3/2]})(-x)$, $m \in \N$. In that case $\AAa=\{-1/2\}$.}
\end{figure}

We are going to show that there is a sequence $(k(m))_m$ such that for all $x$, $z \in \R^d$ with $x-z \notin \AAa := \supp W \setminus \bigcup_m K^{(m)}$,
\begin{align}\label{W_km_converges_uniformly}
W_{k(m)}^{(m)}(x,z) \to W(x - z) \ \text{as} \ m \to \infty.
\end{align}

Set $k(1):=1$. Assume $k(m-1)$ is already defined. Uniform continuity of $W\vert_{K^{(m)}}$ implies that there is $k(m)>k(m-1)$ such that for $s$, $t \in K^{(m)}$ with $|t-s| \leq 2^{-k(m)}$ one has $|W^{(m)}(t)-W(s)| \leq 1/m$. Hence for all $x$, $z \in \R^d$ with $x-z \in K^{(m)}$
\begin{align}\label{eq:unifcontrol}
&|W_{k(m)}^{(m)}(x,z)-W(x-z)|
= |\sup_{t \in Q_{k(m)}(z)} W^{(m)}(x-t) - W(x-z)| \leq \frac 1 m,
\end{align}
since $|x-t-x+z| \leq 2^{-k(m)}$. In order to verify \eref{W_km_converges_uniformly} one has to consider two cases: If $x-z \in \bigcup_{m}K^{(m)}$ there is $m_0$ such that $x - z \in K^{(m)}$ for $m \geq m_0$. Hence \eref{eq:unifcontrol} gives convergence of $W^{(m)}_{k(m)}(x,z)$ to $W(x-z)$ as $m \to \infty$. If $x-z \notin \supp W$ closedness of $\supp W$ assures the existence of $k_0$ such that $x-t \notin \supp W $ for all $t \in Q_{k_0}(z)$, hence for all $m$ and for all $k \geq k_0$, $W^{(m)}_{k}(x,z) = 0$ and convergence also holds in this case.
Set $W_m:=W^{(m)}_{k(m)}$, $V_m:= V^{(m)}_{k(m)}$, $k = k(m)$.

The approximation of $W$ by $(W_m)_m$ assures convergence of $V_m$ to $V$: In fact, let $x \in \R^d$ and introduce the events $\MM_1(x):= \{ x \notin [\chi]+\AAa\}$ and $\MM_2:=\{ \chi \text{ is locally finite}\}$. Then $\P[(\MM_2)^c]=0$, and $\P[(\MM_1(x))^c] =  \P[\chi(x- \AAa)>0]=0$ since $\leb(\AAa) = 0$. By \eref{W_km_converges_uniformly} on $\MM_1(x) \cap \MM_2$ we have $V_m(x)$ to $V(x)$, hence
\begin{align}\label{eq:conv_forall_x}
\text{for} \ \text{all} \ x \in \R^d \ \P\text{-a.s.} & \quad V_m(x) \to V(x) \ \text{for} \ m \to \infty.
\end{align}
On the other hand local finiteness of $\chi$ implies $\P$-a.s.\ $\leb( [\chi] + \AAa) = 0$. Hence
\begin{align}
\label{eq:conv_forleball_x}
\P \text{-a.s.} \ \text{for} \ \leb \text{-a.e.} \ x \in \R^d & \quad V_m(x) \to V(x) \ \text{for} \ m \to \infty.
\end{align}

That the quantity $a$ is 'compatible' with these convergences will be shown in the following: Choose $x = 0$, let $\BB:= \{ z + w: z,w \in \R^d, z \in \supp W, \ w \ \text{or} \ -w \in Q_1(0)\}$, and define $V_\infty(\cdot) := \sum_{p \in [\chi ]} \|W\|_\infty 1_{\BB}(  \cdot - p)$. Since $V_m (0) \leq V_\infty(0)$ by dominated convergence \eref{eq:conv_forall_x} gives $\E V_m(0) \to \E V(0)$. Interpreting $a(y,r)$, $r\in \R_{\geq 0}$, as the Laplace transform of the hitting time $H(y)$, continuity of Laplace transforms implies
\begin{align}\label{eq:a(EV_m)toa(EV)}
a(y,\eta + \E V_m(0)) \to a(y,\eta + \E V(0)) \ \text{for} \ m \to \infty.
\end{align}
Applying Lemma \ref{lem:Lambda_continuous} and \eref{eq:conv_forleball_x} gives $\P$-a.s.\ convergence of
$
a(y, \eta + V_m)$ to $a(y, \eta + V) \ \text{for} \ m \to \infty.
$
The fact that $V_m \leq V_\infty$, $\P$-integrability of $a(y,\eta + V_\infty)$ (use \cite[(5.2.33)]{Sznitman98}) and dominated convergence imply
\begin{align}\label{eq:Ea(V_m)toEa(V)}
\E a(y, \eta + V_m) \to \E a(y, \eta + V) \ \text{for} \ m \to \infty.
\end{align}

For $N > 0$ introduce the event
$\NN := \{ \|Z_s\|_\infty \leq N ~ \ \text{for all} \ s \leq H(y)\}$ and define $a(y,\eta + V,\NN) := - \ln E[ \exp\{- \int_0^{H(y)} \eta + V(Z_s) ds\},  \NN]$. The discrete properties of the potential $V_m$ enable us to deduce for $N > 0$
\begin{align}\label{eq:aVN_meq_aEV}
\E a(y,\eta + V_m,\NN) \leq a(y,\eta + \E V_m(0),\NN).
\end{align}
Indeed, since for all $x \in \R^d$ the function $W_m(x,\cdot)$ is constant on the sets $ Q_{k}(z)$, $z \in 2^{-k}\Z^d$, we get
\begin{align*}
a(y,\eta + V_m,\NN)
&=- \ln E[ \exp\{- \int_0^{H(y)} \eta + \sum_{p \in [\chi ]} W_m(Z_s,p) ds\},  \NN]\\
&=- \ln E[ \exp\{- \eta H(y) - \sum_{z \in \ZZ } \# Q_{k}(z) \int_0^{H(y)} W_m (Z_s,z) ds\},  \NN],
\end{align*}
where $\# Q_{k}(z)$ denotes the number of points of $\chi$ in $Q_{k}(z)$, and $\ZZ :=  \{ z \in 2^{-k}\Z^d : \|z\|_\infty \leq N+\sup\{\|t\|_\infty: t \in \supp W \}+  2^{-k}\}$. Indeed, on $\NN$ for all $z \notin \ZZ$ the coefficient $w_z:=\int_0^{H(y)} W_m(Z_s,z)ds=0$.
Define the mapping $\Lambda: \R_{\geq 0}^\ZZ \to \R$,
\begin{align*}
\Lambda(u) := -\ln E[ \exp\{ - \eta H(y) - \sum_{z \in \ZZ} w_z u_z \},\NN].
\end{align*}
$\Lambda$ is concave: For $u \in \R^\ZZ$ and $\gamma \geq 0$ set $f_\gamma(u):= \exp\{ - \gamma(\eta H(y) + w \cdot u) \}1_\NN$. Consider $\gamma \in (0,1)$, $u, v \in \R^\ZZ$, then H\"older's inequality implies
\begin{align*}
E[ f_\gamma(u) f_{1-\gamma}(v)]
& \leq E[ f_\gamma(u)^{1/\gamma}]^\gamma E[f_{1-\gamma}(v)^{1/(1-\gamma)}]^{1-\gamma}
 = E[ f_1(u)]^\gamma E[f_1(v)]^{1-\gamma}.
\end{align*}
This together with the fact that $\Lambda(\gamma u +(1-\gamma) v)=-\ln E[f_\gamma(u)f_{1-\gamma}(v)]$ shows concavity of $\Lambda$.
Moreover $w_z$ does not depend on the underlying Poisson point process, therefore by Jensen's inequality
\begin{align*}
\E a(y,\eta + V_m,\NN)
& =  \E \Lambda((\#Q_k(z))_{z \in \ZZ})
 \leq  \Lambda(( \E \# Q_k(z))_{z \in \ZZ})\\
& = - \ln E[\exp\{- \eta H(y) - \int_0^{H(y)} \E[ \sum_{z \in \ZZ } \# Q_k(z)  W_m(Z_s,z) ]ds  \},  \NN]\\
&= - \ln E[ \exp\{-\int_0^{H(y)} \eta + \E V_m(Z_s) ds \},  \NN]
\end{align*}
which proves \eref{eq:aVN_meq_aEV}.

We get
$
\E a(y,\eta + V_m)
\leq \E a ( y,\eta + V_m, \NN)
\leq a ( y,\eta +  \E V_m(0), \NN)
$.
Taking the limit $N \to \infty$ the monotone convergence theorem implies
$\E a(y,\eta + V_m) \leq a ( y,\eta +  \E V_m(0))$.
\eref{eq:a(EV_m)toa(EV)} and \eref{eq:Ea(V_m)toEa(V)} now show the statement.
\end{proof}

The potential, we received by the discretization in the previous proof, appeared in a slightly different way in literature: In fact, we also could have discretized $W^{(m)}$ by
\begin{align*}
\tilde W^{(m)}_k(x,z) := \sup_{s \in Q_k(x), \ t \in Q_k(z)} W^{(m)}(s-t),\quad 
\tilde V_k^{(m)}(x) := \sum_{p \in [\chi ]} \tilde W_k^{(m)}(x,p).
\end{align*}
Then, since it is constant on cubes, $\tilde V_k^{(m)}$ resembles the potential considered for Brownian motion in random scenery, see \cite{Asselah03}.

Lemma \ref{lem:apois_leq_aconst} together with Theorem \ref{theo:lya_quenched} proves Theorem \ref{prop:alphapois_leq_alphaconst}. The well known fact that in the case of constant potential $\eta >0$, $\nu = 0$ $\alpha_\eta(y) = \sqrt{2 \eta} \|y\|_2$ for $y \in \R^d$ shows the equality in Theorem \ref{prop:alphapois_leq_alphaconst}. For convenience we provide a proof for this formula:

We recall the Green function for Brownian motion in constant potential $\eta>0$ ($V = 0$):
\begin{align*}
& g_\eta(x,y)=\frac{2(2\eta)^{\frac{d-2}{2}}}{\sigma_d (d-2)!}\int_1^\infty e^{-\sqrt{2\eta} \|y-x\|_2 t}(t^2-1)^{\frac{d-3}{2}} dt \quad (d \geq 2),\\
& g_\eta(x,y)=\frac{e^{- \sqrt{2 \eta}|y-x|}}{\sqrt{2\eta}} \quad (d=1),
\end{align*}
where $\sigma_d$ is the surface area of the unit ball in $\R^d$ (see for example \cite[Proposition 2.14]{Chung95} and \cite[Paragraph 2.8 Proposition 27]{Dautrey90} or \cite[(5.118) et seqq.]{Stakgold68}). The asymptotic behavior of $g_\eta(x,y)$ is given by
\begin{align}\label{eq:asyptoticlypconst}
\|y-x\|_2^{-\frac{d-1}{2}} e^{-\sqrt{2\eta} \|y-x\|_2} g_\eta(x,y)^{-1} \to C \quad \text{for} \ \|x-y\|_2 \ \to \infty,
\end{align}
where $C >0$. In fact, in the case $d = 1$ this is obvious, for $d \geq 2$ set $l := \|y-x\|_2$, $k:= \sqrt{2\eta}$ and calculate for $l \geq 1$,
\begin{align*}
\int_1^\infty e^{-klt}(t^2-1)^{\frac{d-3}{2}}dt
&= \int_0^\infty e^{-k(v+l)}(\frac {v^2}{l^2} +2 \frac v l)^{\frac{d-3}{2}} \frac {dv} l\\
&= e^{-kl} l^{- \frac{d-1} 2 } \int_0^\infty e^{-kv}(v(\frac v l +2))^{\frac{d-3}{2}} dv,
\end{align*}
where we used the transformation $v=l(t-1)$. Denote by $D_l$ the latter integral. Then $(D_l)_l$ is monotone decreasing in the case $d>3$, is constant in the case $d=3$ and monotone increasing in the case $d=2$. If $d > 3$, $D_l$ can be estimated from above by
$c_1:=\int_0^\infty e^{-kv}(v +2)^{d-3}dv< \infty$ and be bounded from below by
$c_2:= \int_0^\infty e^{-kv}v^{\frac{d-3} 2}dv>0$. For $d=2$, $c_2$ is an upper bound and $c_1$ a lower bound on $D_l$.

Hence  by \eref{eq:asyptoticlypconst} the Lyapunov exponent for constant potential $\eta > 0$ is $\alpha_\eta(y) = \sqrt{2 \eta} \|y\|_2$.

\section*{Proof of the Lower Bound}

Jensen's inequality applied to $a(ny,\eta+V)$ in Theorem \ref{theo:lya_quenched} and Theorem \ref{theo:lya_annealed} shows that $\alpha_{\eta + V} \geq \beta_{\eta + V}$. Hence it suffices to establish the following lower bound for $\liminf_{n \to \infty} \sqrt n \beta_{\eta_n+V_n}$. The proof of the following Proposition parallels the proof of the lower bound in \cite{Kosygina10}.

\begin{proposition}\label{theo:beta_V_geq}
Assume 
$w_\infty :=  \sup_{n \in \N} \|n W_n\|_\infty$, $\xi:= \sup_{n \in \N} \diam \supp ( W_n )$ to be finite. Let $D\geq 0$ such that
\begin{align}\label{eq:assumption1}
\liminf_{n \to \infty} n(\eta_n +  \nu_n \|W_n\|_1 ) \geq D.
\end{align}
Then for $C \geq 0$,
\begin{align*}
\liminf_{n \to \infty} \inf_{y \in \R^d,\, \|y\|_2=C} \sqrt{n} \beta_{\eta_n + V_n}(y) \geq C \sqrt{ 2 D }.
\end{align*}
\end{proposition}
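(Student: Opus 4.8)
The plan is to bound the annealed survival probability $\E e(ny, \eta_n + V_n)$ from above, which by Theorem \ref{theo:lya_annealed} gives a lower bound on $\beta_{\eta_n+V_n}(y)$. The starting point is the identity
\[
\E e(ny,\eta_n+V_n) = E\Bigl[\exp\Bigl\{-\eta_n H(ny) - \int_0^{H(ny)} \E\Bigl[\sum_{p\in[\chi]} W_n(Z_s - p)\Bigr]\,ds\Bigr\}\,1_{H(ny)<\infty}\Bigr]
\]
obtained by pulling the Poisson expectation inside via the exponential formula, exactly as in the second-to-last display of the proof of Lemma \ref{lem:apois_leq_aconst} — but here one keeps the \emph{exact} expression $\E\exp\{-\int_0^{H(ny)} V_n(Z_s)\,ds\}$ rather than discarding anything. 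Concretely, conditioning on the Brownian path $Z$ and using the Poisson exponential formula, $\E\exp\{-\int_0^{t} V_n(Z_s)\,ds\} = \exp\{-\nu_n \int_{\R^d}(1-\exp\{-\int_0^t W_n(Z_s-p)\,ds\})\,dp\}$. Since $x \mapsto 1-e^{-x}$ is concave and lies below $x$, but we want a \emph{lower} bound on the rate (upper bound on $e$), I need instead $1-e^{-x}\geq x - x^2/2$; this is where the hypotheses $\|nW_n\|_\infty \leq w_\infty$ and $\diam\supp(W_n)\leq \xi$ enter, since they make $\int_0^t W_n(Z_s-p)\,ds$ uniformly small of order $1/n$ on the relevant region, so the quadratic correction is $O(n^{-2})$ per unit local time and hence negligible after integrating. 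After this step one arrives at a bound of the form
\[
\E e(ny,\eta_n+V_n) \leq E\Bigl[\exp\Bigl\{-\bigl(\eta_n + \nu_n\|W_n\|_1\bigr)H(ny) + R_n\Bigr\}1_{H(ny)<\infty}\Bigr],
\]
with $R_n$ an error term controlled by the self-intersection local time of Brownian motion on $[0,H(ny)]$, whose contribution after scaling is $o(1/n)$ times $H(ny)$ — this is the point that mirrors the delicate estimates in \cite{Kosygina10} and is, I expect, the main obstacle: one must show the quadratic remainder does not survive the $\sqrt n$ rescaling, uniformly in $\|y\|_2 = C$.

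Granting that, set $\kappa_n := \eta_n + \nu_n\|W_n\|_1$, so $n\kappa_n \to$ something $\geq D$ by \eref{eq:assumption1}, and we are reduced to estimating the Laplace transform of the hitting time $H(ny)$ for \emph{free} Brownian motion with a constant killing rate $\kappa_n$. But $E[\exp\{-\kappa_n H(ny)\}1_{H(ny)<\infty}] = e(ny,\kappa_n)$ is exactly the survival probability in the constant potential $\kappa_n$, and by the explicit Green-function asymptotics already derived in the excerpt (equation \eref{eq:asyptoticlypconst} and the computation following it), its exponential decay rate is the constant Lyapunov exponent $\alpha_{\kappa_n}(y) = \sqrt{2\kappa_n}\,\|y\|_2$. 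Taking $-\frac1m\ln$ of the bound along $y \mapsto my$ and letting $m\to\infty$ gives $\beta_{\eta_n+V_n}(y) \geq \sqrt{2\kappa_n}\,\|y\|_2 - (\text{error from }R_n)$, hence $\sqrt n\,\beta_{\eta_n+V_n}(y) \geq \sqrt{2\,n\kappa_n}\,\|y\|_2 - o(1)$.

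Finally I would take $\liminf_{n\to\infty}$, use $\liminf n\kappa_n \geq D$, and specialize to $\|y\|_2 = C$. To get the \emph{uniform} lower bound over the sphere $\{\|y\|_2 = C\}$ claimed in the proposition, note that all the estimates above depend on $y$ only through $\|y\|_2$ (Brownian motion is isotropic and $H$, $g_\eta$ for the constant potential are radial), so the bound is literally the same for every $y$ with $\|y\|_2 = C$; thus the infimum over the sphere is attained trivially and $\liminf_{n\to\infty}\inf_{\|y\|_2=C}\sqrt n\,\beta_{\eta_n+V_n}(y) \geq C\sqrt{2D}$. The only place genuine care is needed is verifying that the remainder term $R_n$, after dividing by $m$ and sending $m\to\infty$ and then multiplying by $\sqrt n$, contributes nothing in the limit — this requires quantitative control (uniform in $n$) on how much Brownian motion backtracks over itself before hitting $\overline{B(ny,1)}$, which is the technical heart borrowed from \cite{Kosygina10}.
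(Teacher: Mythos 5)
Your outline correctly identifies the entry point (the Poisson exponential formula, rewriting $\E e(ry,\eta_n+V_n)$ as a Wiener expectation of $\exp\{-\eta_n H - \nu_n\int_{\R^d}(1-\exp\{-\int_0^H W_n(Z_s-x)ds\})dx\}$) and the exit point (a constant-potential Laplace transform yielding $\sqrt{2\kappa_n}\|y\|_2$, uniform over the sphere by isotropy), and both coincide with the paper. The gap is in the middle, and you have sensed where it lies, but you then assert something false and defer the rest. Specifically, the claim that the hypotheses $\|nW_n\|_\infty\leq w_\infty$, $\diam\supp W_n\leq\xi$ ``make $\int_0^{H(ny)} W_n(Z_s-p)\,ds$ uniformly small of order $1/n$'' is not correct: that integral is bounded by $\|W_n\|_\infty$ times the occupation time $L_{\supp W_n+p}^{H(ry)}$ of a small ball, and the Brownian path may spend an arbitrarily long time near $p$ before reaching $\overline{B(ry,1)}$. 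So neither $1-e^{-x}\geq \delta x$ nor your $1-e^{-x}\geq x-x^2/2$ can be applied pathwise; and with the latter, $\int_{\R^d}(\int_0^H W_n(Z_s-p)\,ds)^2\,dp$ is a self-intersection-type functional that need not be $o(1/n)\cdot H$. Proving this remainder does not survive the $\sqrt n$ rescaling is precisely the content of the proposition, not a deferred technicality.

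The paper closes this gap with a concrete construction you have not supplied. It decomposes $[0,H(ry)]$ into excursions between hitting times $T_i$ of hyperplanes $\{x\cdot y=in^{1/2}\}$, truncates each excursion's contribution at level $w_\infty n^{-1/4}$, and restricts the range of $i$ to a window of length $n^{1/8}$ so the argument of $\Lambda(t)=1-e^{-t}$ is provably $\leq t_0$ and the linear lower bound $\Lambda(t)\geq\delta t$ applies. The strong Markov property at the $T_i$'s then makes the per-excursion contributions i.i.d.\ under $P$. Finally it restricts to the event $\AAa_n$ (no backtracking beyond $n^{5/8}-2\xi$, and occupation time of every ball $\suppW+x$ at most $n^{3/4}$ before $T_1$), on which the truncated quantity equals $\delta T_1\|W_n\|_1$ exactly; $P[\AAa_n^c]\to0$ is proved by a one-dimensional gambler's ruin estimate plus local-time scaling. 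The explicit one-dimensional hitting-time Laplace transform gives $\exp\{-\sqrt{2n(\eta_n+\delta\nu_n\|W_n\|_1)}\}$, and $\delta\uparrow1$ finishes. In short, ``granting that'' grants the theorem: the truncation scheme and good event are the proof, and they are absent from your proposal.
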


\begin{proof}
Let $n \in \N$. For $C=0$ the statement is trivial, so assume $C >0$. The Lyapunov exponent is a norm on $\R^d$, hence
$
\liminf_{n \to \infty} \inf_{y \in \R^d,\, \|y\|_2=C} \sqrt{n} \beta_{\eta_n + V_n}(y)
$
equals
$
C \liminf_{n \to \infty} \inf_{y \in \R^d,\, \|y\|_2=C} \sqrt{n} \beta_{\eta_n + V_n}(y/\|y\|_2) 
$
and we can restrict ourselves to the case $C = 1$. Furthermore, the underlying Poisson point process is translation invariant, thus a shift of $\chi$ does not modify the Lyapunov exponent. Therefore, by appropriate translations of the functions $W_n$ we suppose without restriction $\suppW := \overline{B(0,\xi)} \supset \bigcup_n \supp W_n$.

Let $r \ge 1$, $n \in \N$ and $y \in \R^d$ with $\|y\|_2=1$. We calculate
\begin{align}\label{eq:teila}
\begin{split}
&\E e(ry, \eta_n +  V_{n}) 
= E[\exp\{- \eta_n H(ry)\} \E [\exp\{ -\int_0^{H(ry)} V_{n}(Z_s)ds\}]]\\
&\qquad =E[\exp\{- \eta_n H(ry)\}\E[\exp\{-\int_{\R^d}
\int_0^{H(ry)}W_{n}(Z_s-x)ds\,\chi(dx)\}]]\\
&\qquad =E[\exp\{- \eta_n H(ry) - \nu_n \int_{\R^d} (1 - \exp\{-\int_0^{H(ry)}W_{n}(Z_s-x)ds\})dx\}],
\end{split}
\end{align}
where we computed the Laplace transform for the random measure $\chi$ (see for instance \cite[Lemma 10.2]{Kallenberg97}).

Define $\Lambda:\R \to \R$ by $\Lambda(t):= 1- \exp\{- t\}$. Fix $\delta \in (0,1)$ and choose $t_0= t_0(\delta) > 0$ such that for $0 \leq t \leq t_0$,
\begin{align} \label{Lambdageqlinear}
\Lambda(t) \geq \delta t.
\end{align}
Let
$
n \geq n_0 := \left\lceil (t_0 /(2 w_\infty))^{-8} \right\rceil
$
and $r \geq 1$.
For $i \in \N_0$ introduce the stopping times
$T_i:= \inf\{s \geq 0 : Z_s \cdot y = in^{1/2} \}$. For $x \in \R^d$ set 
$
i_x:=\min\{i \in \N_0 :  x \cdot y \leq i n^{1/2} + \xi\},
$
for $a \leq b \in \R_{\geq 0}$ abbreviate
$
J_{a}^{b}(x):=  \int_{a}^{b}W_{n}(Z_s-x)ds
$, and define
$
m_r:= \left\lfloor ( r - 1)/n^{1/2} \right\rfloor.
$
We rewrite for $x \in \R^d$ the integral $\int_0^{T_{m_r}}W_{n}(Z_s-x)ds$ as the following sum and truncate:
\begin{align}\label{eq:truncation}
\int_0^{T_{m_r}}W_{n}(Z_s-x)ds 
= \sum_{i= 1}^{m_r} J_{T_{i-1}}^{T_i}(x)
\geq \sum_{i= 1 \vee i_x}^{m_r \wedge \lfloor i_x + n^{1/8}\rfloor} (J_{T_{i-1}}^{T_i}(x) \wedge (w_\infty n^{-1/4})).
\end{align}
Then the term on the right hand side of \eref{eq:truncation} is less than or equal to 
\begin{align}\label{eq:Lambda_applicable}
(n^{1/8}+1) w_\infty n^{-1/4}
\leq 2 w_\infty n^{-1/8} \leq  t_0.
\end{align}
Observe that
\begin{align}\label{eq:H_geq_T}
H(ry) \geq T_{m_r}.
\end{align}
Indeed $\overline{ B(ry, 1)} \subset \HH:= \{x \in \R^d: xy \geq m_r n^{1/2}\}$, hence $\inf\{ s \geq 0: Z_s \in \overline{ B(ry, 1)}\} \geq \inf\{ s \geq 0: Z_s \in \HH\}$. Monotonicity of $\Lambda$ and inequalities \eref{Lambdageqlinear}, \eref{eq:truncation} and \eref{eq:Lambda_applicable} yield
\begin{align}\label{eq:Lambda_leq}
\Lambda(J_{0}^{H(ry)}(x))
\geq  \delta \sum_{i= 1 \vee i_x}^{m_r \wedge \lfloor i_x + n^{1/8}\rfloor} (J_{T_{i-1}}^{T_i}(x) \wedge (w_\infty n^{-1/4})).
\end{align}
Note that $\{x \in \R^d: i_x \leq i \text{ and } i \leq i_x+n^{1/8}\}$\\
= $\{x \in \R^d: x \cdot y \leq in^{1/2}+\xi  \text{ and } x\cdot y>(i-n^{1/8}-1)n^{1/2}+\xi \}$.
Hence, by \eref{eq:Lambda_leq},
\begin{align}\label{eq:teilb}
 &\int_{\R^d} (1- \exp\{-\int_0^{H(ry)}W_{n}(Z_s-x)ds\})dx
\geq \sum_{i= 1 }^{m_r } Y_{i},
\end{align}
where
\begin{align*}
Y_{i}
&:= \delta \int_{\SSs_i} J_{T_{i-1}}^{T_i}(x) \wedge (w_\infty n^{-1/4})dx,\\
\SSs_i&:=\{x \in \R^d:-n^{5/8}+\xi  < x \cdot y- (i-1)n^{1/2} \leq n^{1/2}+ \xi\}.
\end{align*}
The sequence
$
(\eta_n(T_i - T_{i-1}) + \nu_n Y_{i})_i
$
is i.i.d. under $P$. In fact
\begin{align*}
\delta^{-1} Y_{i}
&= \int_{\SSs_i} \int_{T_{i-1}}^{T_{i}}W_{n}(Z_s-x)ds \wedge (w_\infty n^{-1/4}) dx\\
&= \int_{\{z: -n^{-5/8} + \xi < z \cdot y \leq n^{1/2} + \xi\}} \int_{T_{i-1}}^{T_{i}}W_{n}(Z_s-Z_{T_{i-1}}-z)ds \wedge (w_\infty n^{-1/4}) dz\\
&= \int_{\{z: -n^{-5/8} + \xi < z \cdot y \leq n^{1/2} + \xi\}} \int_{0}^{\tau_{i}}W_{n}(Z_{s+T_{i-1}}-Z_{T_{i-1}}-z)ds \wedge (w_\infty n^{-1/4}) dz,
\end{align*}
where we used the transformation $z= x - Z_{T_{i-1}}$, and $\tau_i:= \inf\{s \geq 0 : (Z_{s+ T_{i-1}}-Z_{T_{i-1}}) \cdot y = n^{1/2}\}$. Further, $T_i-T_{i-1} = \tau_i$, thus $\eta_n(T_i - T_{i-1}) + \nu_n Y_{i}$ is a function of the process $(Z_{s + T_{i-1}} - Z_{T_{i-1}})_{s \geq 0}$. An application of the strong Markov property, i.e.\ of the fact that $(Z_{s + T_{i-1}} - Z_{T_{i-1}})_{s \geq 0 }$ is independent of $\FF_{T_{i-1}}$ and distributed like $(Z_s)_{s\geq 0}$ (see for instance \cite[Theorem 11.11]{Kallenberg97}) proves the statement. Thus, \eref{eq:teila}, \eref{eq:H_geq_T} and \eref{eq:teilb} imply
\begin{align}\label{eq:e_leq}
\E e(ry, \eta_n +  V_{n,k}) 
\leq E[\exp\{- \sum_{i= 1 }^{m_r } ( \eta_n \tau_i  - \nu_n Y_{i}) \}]
= E[\exp\{- \eta_n T_1 - \nu_n Y_{1}\}]^{m_r}.
\end{align}
\begin{figure}[!tb]
 \begin{center}
  \includegraphics{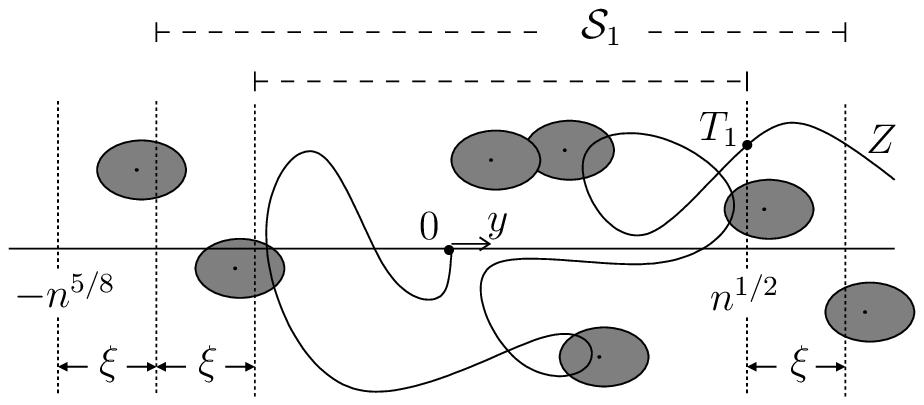}
 \end{center}
\caption{On the event $\AAa_n$ $Z$ is restricted to stay in the strip $\{x \in \R^d: -n^{5/8}+2\xi < x \cdot y \leq n^{1/2}\}$ until $T_1$. In this situation $Z$ can only feel obstacles located in $\SSs_1$.}
\end{figure}
For $\QQ \subset \R^d$ measurable and $T$ a stopping time we denote by $L_\QQ^T$ the time spent by $Z$ in $\QQ$ until time $T$, i.e.\ $L_\QQ^T = \int_0^T 1_\QQ(Z_s)ds$. On the event
\begin{align*}
\AAa_n:=\{Z_s\cdot y > - n^{5/8} +2 \xi \text{ for all } s \leq T_1\}
\cap\{ L_{\suppW +x}^{T_1} \leq n^{3/4} \text{ for all } x \in \R^d\}
\end{align*}
we have 
\begin{align}\label{eq:Y_1=}
Y_{1} = \delta T_1 \|W_{n}\|_1.
\end{align}
Indeed, on $\AAa_n$ for $s \leq T_1$ we have $Z_s - \supp W_n \subset \SSs_1$. Since $x \in Z_s - \supp W_n$ if and only if $W_n(Z_s-x) \neq 0$, this shows that on $\AAa_n$ for $s \leq T_1$ we have $\int_{\SSs_1} W_{n}(Z_s-x)dx = \|W_n\|_1$. Hence, by Fubini's Theorem, on $\AAa_n$ we get $ \int_{\SSs_1} \int_0^{T_1} W_{n}(Z_s-x)ds\,dx = T_1 \|W_{n}\|_1$. Since $\supp W_n \subset \suppW$ and $n \|W_n\|_\infty < w_\infty$, on $\AAa_n$
\[
\int_0^{T_{1}}W_{n}(Z_s-x)ds \leq n^{-1} w_\infty L_{\suppW+x}^{T_1} \leq w_\infty n^{-1/4}.
\]
This shows $Y_{1} = \delta \int_{\SSs_1} \int_0^{T_1} W_{n}(Z_s-x)ds\,dx$ on $\AAa_n$, and \eref{eq:Y_1=} follows.
Using the Laplace transform for one-dimensional Brownian motion hitting times (see for instance \cite[(7.4.4)]{Durrett05}), we get by \eref{eq:e_leq} and \eref{eq:Y_1=} for $n \geq n_0$, $r \ge 1$, 
\begin{align*}
\E e(ry, \eta_n + V_{n})
& \leq (E[\exp\{- \eta_n T_1-  \delta \nu_n \|W_{n}\|_1 T_1 \}, \AAa_n]+P[\AAa_n^c])^{m_r}\\
& \leq (\exp\{- \sqrt{2 n \eta_n + 2 n \delta \nu_n \|W_{n}\|_1 } \}+P[\AAa_n^c])^{m_r},
\end{align*}
which implies
\begin{align*}
-\frac{1}{r}\ln \E e(ry,\eta_n+V_n)
& \geq - \frac{m_r}{r}\ln (\exp\{- \sqrt{ 2 n\eta_n + 2 n \delta \nu_n \|W_{n}\|_1}\}+P[\AAa_n^c]).
\end{align*}
Therefore, taking the limit in $r$ Theorem \ref{theo:lya_annealed} shows for $n \geq n_0$,
\begin{align}\label{eq:beta_ge}
\sqrt n \beta_{\eta_n+V_n}(y)
& \geq - \ln (\exp\{- \sqrt{2 n \eta_n+ 2 n \delta \nu_n  \|W_n\|_1}\}+P[\AAa_n^c]).
\end{align}
By rotational invariance of the Brownian law, $P[\AAa_n^c]$ does not depend on the direction of $y$. In addition, the choice of $n_0$ was independent of $y$. Therefore, for $n \geq n_0$, for all $y\in \R^d$ with $\|y\|_2=1$,
$
\inf_{y \in \R^d, \, \|y\|_2=1} \sqrt n \beta_{\eta_n+V_n}(y)
$
is greater or equal the right hand side of \eref{eq:beta_ge}.
We take the limes inferior in $n$ and use the fact that $P[\AAa_n^c] \to 0$ for $n \to \infty$ (see \eref{eq:PA_n_to_0}). By assumption \eref{eq:assumption1} we obtain
\begin{align*}
\liminf_n \inf_{y \in \R^d, \, \|y\|_2=1} \sqrt n \beta_{\eta_n + V_n}(y)
&\geq \liminf_n \sqrt{ 2 n\eta_n + 2 n \delta \nu_n  \|W_n\|_1}\\
&\geq \delta \liminf_n \sqrt{ 2 n\eta_n + 2 n \nu_n  \|W_n\|_1}
\geq \delta \sqrt{2 D}.
\end{align*}
Taking the supremum over $\delta \in (0,1)$ shows the statement.

To complete the proof it remains to show
\begin{align}\label{eq:PA_n_to_0}
P[\AAa_n^c] \to 0 \ \text{for} \ n \to \infty.
\end{align}
First
\begin{align*}
P[\AAa_n^c] 
\leq P[\exists s \leq T_1: Z_s\cdot y \leq - n^{5/8} + 2\xi ] + 
P[\exists x \in \R^d: L_{\suppW+x}^{T_1} > n^{3/4} ].
\end{align*}
The first term on the right-hand side is the probability that a one-dimensional Brownian motion hits
$-n^{5/8} +2\xi$ before hitting $n^{1/2} $. This probability is smaller than
$n^{1/2}/(n^{1/2}+n^{5/8}-2\xi)$ (see for example \cite[Theorem 7.5.3]{Durrett05}) and therefore converges to zero as $n$ goes to infinity.

In order to examine the second term, we will reduce the problem to a one-dimensional setting. For this purpose, notice that
$
Z_s \in \suppW$ implies $\pi_y(Z_s) \in \pi_y(\suppW)
$
where $\pi_y$ is the projection onto $y\R$. Hence
\begin{align*}
L_{\suppW+x}^{T_1} = \int_0^{T_1}1_{\suppW+x}(Z_s)ds 
\leq \int_0^{T_1}1_{\pi_y(\suppW+x)}(\pi_y(Z_s))ds.
\end{align*}
Since the term on the right side may be interpreted as the time spent by a one-dimensional Brownian motion in a ball with radius $\xi$ and center $\pi_y(x)$ before hitting $n^{1/2}$, we assume without restriction in the following $Z=(Z_s)_s$ to be a one-dimensional Brownian motion, the set $\suppW$ to be the interval $[-\xi, \xi]$ and $T_1$ to be the hitting time of $n^{1/2}$. We then have to show convergence to zero of $P[\exists x \in \R: L_{\suppW + x}^{T_1} > n^{3/4}]$ for $n \to \infty$.
Note that for dimension $d =1$ the occupation times formula shows $P$-a.s.\ $L_{\suppW+x}^{T_1} = \int_{\suppW+x} l_z^{T_1} dz$, where $(l_z^t)_{t,z}$ is the local time process of one-dimensional Brownian motion (see for instance \cite[Corollary 6.1.6]{Revuz91} and the first remark thereafter).

Using scaling relations for the local time process (see \cite[Exercise 6.2.11]{Revuz91}) we get
\begin{align*}
L_{\suppW + x}^{T_1} = \int_{\suppW + x} l_z^{T_1} dz \overset{d}{=} n^{1/2} \int_{\suppW + x} l_{n^{-1/2}z}^{H(\{1\})} dz
\end{align*}
where $H(\{1\})$ denotes the hitting time of the set $\{1\}$. The local time process is $P$-a.s.\ bounded, i.e.\ $\sup_{z \in \R} l_z^{H(\{1\})} < \infty$ $P$-a.s.. Indeed, $P$-a.s.\ $z \mapsto l_z^{H(\{1\})}$ is a density for the occupation measure $\mu :  A \mapsto \int_0^{H(\{1\})} 1_A(Z_s)ds$ (use \cite[Corollary 6.1.6]{Revuz91}). Since $P$-a.s.\ $H(\{1\})$ is finite, the range of $Z_s$ up to time $H(\{1\})$ is bounded, thus $\mu$ has $P$-a.s.\ compact support $\supp \mu$. Furthermore, $P$-a.s.\ $(z,t) \mapsto l_z^t$ is jointly continuous  (see \cite[Theorem 6.1.7]{Revuz91}), hence $z \mapsto l_z^{H(\{1\})}$ is a continuous function with compact support and therefore bounded. We get
\begin{align*}
P[\exists x \in \R: L_{\suppW + x}^{T_1} > n^{3/4}]
&= P[\exists x \in \R: \int_{\suppW + x} l_{n^{-1/2}z}^{H(\{1\})} dz > n^{1/4}]\\
&\leq P[\leb(\suppW) \| l_{\cdot}^{H(\{1\})} \|_\infty > n^{1/4}].
\end{align*}
Finiteness of $\| l_{\cdot}^{H(\{1\})} \|_\infty$ and $\sigma$-continuity of $P$ yield convergence to $0$ of the latter.
\end{proof}

\vspace{0.5 cm}
\noindent
\textbf{Acknowledgements:}
This work was funded by the ERC Starting Grant 208417-NCIRW.\\
I am glad to express my thanks to my supervisor Prof. Martin P. W. Zerner for proposing me this interesting problem, for many helpful advises and for his steady support. I thank Dr. Elmar Teufl for many inspiring discussions concerning this subject, and especially for the help with \eref{eq:asyptoticlypconst}.

\vspace{0.5 cm}
\noindent Johannes Rue\ss\\
Mathematisches Institut, Universit\"at T\"ubingen\\
Auf der Morgenstelle 10, 72076 T\"ubingen, Germany\\
Email: johannes.ruess@uni-tuebingen.de

\small
\bibliographystyle{alpha}

\end{document}